\newtheorem{df}{Definition}[section]
\newtheorem{thm}[df]{Theorem}
\newtheorem*{ass}{Assumption}
\newtheorem{lem}[df]{Lemma}
\newtheorem{remark}[df]{Remark}
\begin{document}

\title[Harmonically Enriched Multiscale Coarse Space]{Analysis of a
  New Harmonically Enriched Multiscale Coarse Space for Domain
  Decomposition Methods}

\author{Martin J. Gander}\address{Section of Mathematics, University
  of Geneva, 1211 Geneva 4, Switzerland}\email{Martin.Gander@unige.ch} 

\author{Atle Loneland}\address{Department of Informatics, University
  of Bergen, 5020 Bergen, Norway} \email{Atle.Loneland@ii.uib.no}
\author{Talal Rahman}\address{Department of Computing, Mathematics and
  Physics, Bergen University College, 5020 Bergen, Norway}
\email{Talal.Rahman@hib.no} 

\keywords{Domain Decomposition, Multiscale Coarse Space, Harmonic Enrichment, Two-level methods, Problems with large coefficient variation}

\begin{abstract}
  We propose a new, harmonically enriched multiscale coarse space
  (HEM) for domain decomposition methods. For a coercive high contrast
  model problem, we show how to enrich the coarse space so that the
  method is robust against any variations and discontinuities in the
  problem parameters both inside subdomains and across and along
  subdomain boundaries. We prove our results for an enrichment
  strategy based on solving simple, lower dimensional eigenvalue
  problems on the interfaces between subdomains, and we call the
  resulting coarse space the spectral harmonically enriched multiscale
  coarse space (SHEM). We then also give a variant that performs
  equally well in practice, and does not require the solve of
  eigenvalue problems, which we call non-spectral harmonically
  enriched multiscale coarse space (NSHEM). Our enrichment process
  naturally reaches the optimal coarse space represented by the full
  discrete harmonic space, which enables us to turn the method into a
  direct solver (OHEM). We also extensively test our new coarse spaces
  numerically, and the results confirm our analysis
\end{abstract}

\maketitle

\section{Introduction}

It is well known that domain decomposition methods which are based on
local communication between subdomains need the addition of a coarse
problem in order to be scalable, see for example the books
\cite{smith1996domain,Toselli:2005:DDM} and references therein.  The
coarse space components of the coarse problem can however do much more
than just make a method scalable: work and difficulties of the
underlying domain decomposition method can be transferred into
the coarse space, and an optimal coarse space can even make the
underlying domain decomposition method into a direct solver
\cite{gander2013new,Santugini2013Disc}, like optimal transmission conditions based on the Dirichlet to Neumann 
operator can make a Schwarz method into a direct solver, see \cite{Nataf:1995:FCD,Gander:2006:OSM} and
references therein for optimal Schwarz methods, and \cite{gander2011optimal} for transmission conditions which include an optimal coarse space component that leads to convergence in two iterations, independently of the number of subdomain and underlying equation. Such optimal transmission 
conditions and coarse space components are however very expensive to use, and in 
practice one approximates them. In the case of transmission conditions, this led to 
the new class of optimized Schwarz methods, which have the same computational
cost per iteration as classical Schwarz methods, but converge much faster; for an overview and references, see \cite{Gander:2006:OSM}. In the case of coarse spaces, one can select the most important 
subspace of the optimal coarse space to increase the performance of the method, 
an example of this has been shown in \cite{gander2013new}.


We present here a simple procedure that allows us to systematically
enrich a given coarse space, up to a maximum degree where it becomes a
direct solver.  Since our interest is in high contrast problems, we
start with a classical multiscale finite element coarse space. The
multiscale finite element method was developed to cope with problems that
have many spatial scales \cite{Hou:1997:MsFEM}. The idea is to replace
the classical finite element shape functions by harmonic functions,
i.e. functions that solve the underlying equation locally. An
important problem in multiscale finite element methods is what
boundary conditions one should impose on these harmonic shape
functions. The approach we propose here for our coarse space
enrichment can also be used to enrich a multiscale finite element
space, and if combined with non-overlapping subdomain solves, will in the limit reach the fine scale finite element
solution, which is an important property of our enrichment process.

Domain decomposition methods for problems where the discontinuities in the coefficient are resolved by the coarse mesh and the subdomains have been analyzed thoroughly, (cf. \cite{Dryja:1996:MultSchw,Dryja:1994:Schw,Mandel:1996:BDD,Sarkis:1997:NCS,Xu:1998:SomeNonOver,BJORSTAD:1996:AVG} and references therein). In the case where the discontinuous coefficients are not aligned with the coarse mesh or the underlying subdomains, efforts have been made to develop coarse spaces that would ultimately lead to robust methods with respect to the contrast in the
problems. We mention especially the use of coarse spaces consisting of multiscale finite element basis functions introduced in \cite{Hou:1997:MsFEM}. This approach was first studied in \cite{Aarnes:2002:DDmult} and later analyzed in \cite{Graham:2007:DDmult}, where the precise dependence of two level
Schwarz methods on the high contrast in a problem was given for the specific case of isolated inclusions. Also,
the importance of harmonic shape functions was shown for coarse grid
corrections. A finite volume multiscale coarse space was proposed in \cite{Nordbotten:2008:RelDDmult} and in \cite{Pechstein:2008:FETIMult,Pechstein:2011:FETIMult} FETI methods were analyzed for multiscale problems.


A first idea to enrich the coarse space by eigenfunctions for tackling problems with high contrast can be found in
\cite{Galvis:2010:DDmult,Galvis:2010:DDmult:RE}, where selected subdomain eigenfunctions
are combined with different types of partition of unity functions and
the importance of the initial coarse space based on multiscale and
energy minimizing basis functions is discussed, see also
\cite{Efendiev:2012:DDprecSPD}. A different way to construct a coarse
space using eigenfunctions of the Dirichlet to Neumann map of each
subdomain was proposed in \cite{Dolean:2012:DTN}.  This approach was
later improved by solving a generalized eigenvalue problem in the
overlaps \cite{Spillane:2014:GENEO}. A good overview of the most
recent approaches can be found in \cite{Scheichl:2013:ROBCOA}.

We start with a numerical experiment to motivate our new harmonic
enrichment process. The problem configuration is shown in Figure
\ref{fig:complicatedex}.
\begin{figure}
   \centering
   \includegraphics[width=\textwidth]{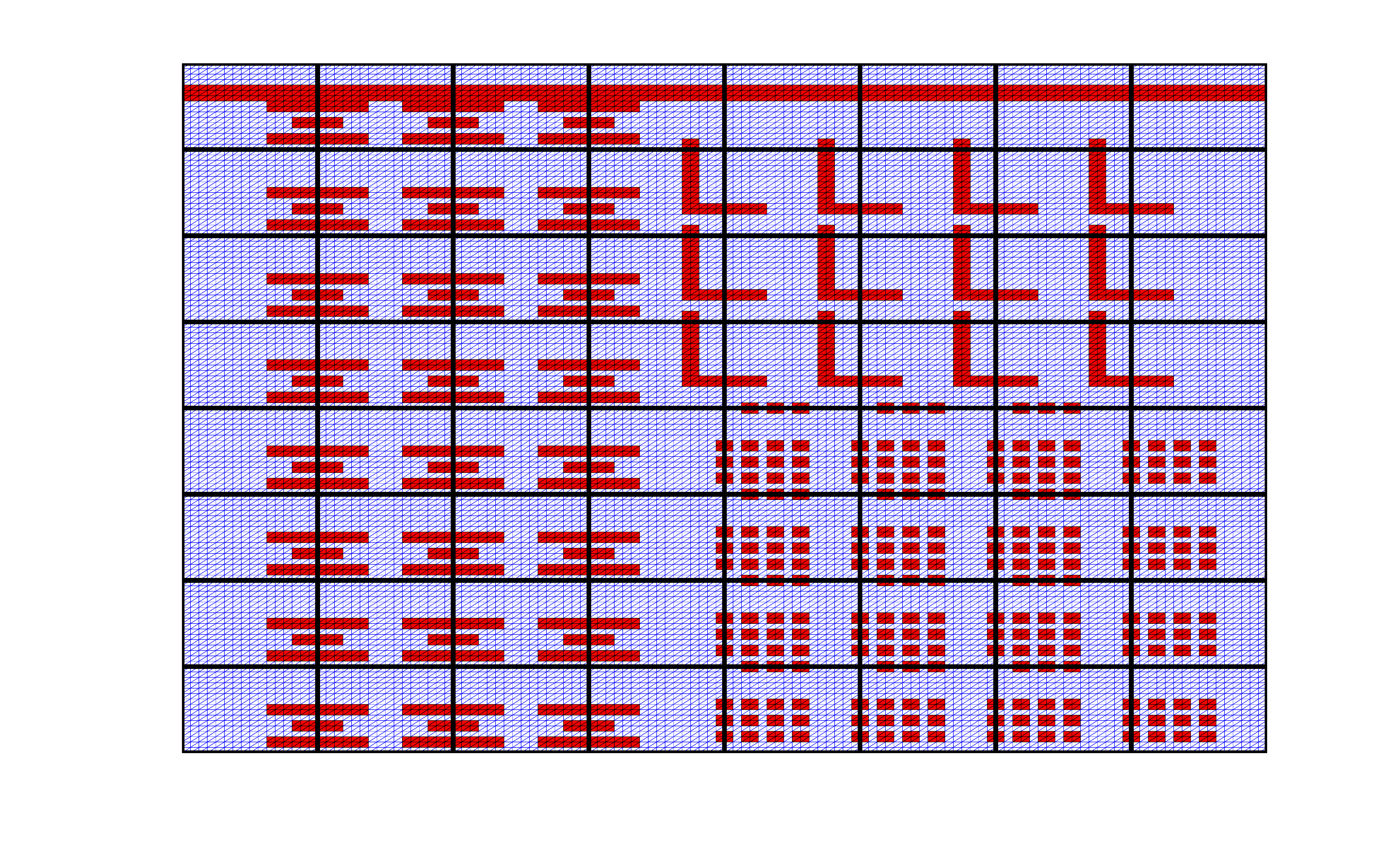}
  \caption{Distribution of $\alpha$ for a geometry with
    $h=\frac{1}{128}$, $H=16h$. The regions marked with red are where
    $\alpha$ has a large value.}
  \label{fig:complicatedex}
\end{figure}
We show in Table \ref{IntroTab1} the iteration numbers and condition
number estimates for a two level additive Schwarz method
when increasing the overlap, where the coarse space is the multiscale coarse space introduced in \cite{Graham:2007:DDmult}. 
\begin{table}\footnotesize
\centering
\small
\tabcolsep0.2em
\begin{tabular}{|c|c|c|c|c|c|c|c|}\hline
  $\delta=2h$&$\delta=4h$&$\delta=6h$&$\delta=8h$&$\delta=10h$&$\delta=12h$&$\delta=14h$&$\delta=16h$\\\hline
610(3.6e6)&339(1.1e6)&330(7.1e5)&241(3.5e5)&224(2.9e5)&222(2.5e5)&33(3.4e1)&30(2.8e1)\\\hline
\end{tabular}
\caption{Iteration and condition number estimate for the distribution
  in Figure~\ref{fig:complicatedex} for a classical two level additive
  Schwarz method when increasing the overlap $\delta$, with
  $h=\frac{1}{128}$, $H=16h$ and contrast $\alpha=10^6$.}
\label{IntroTab1}
\end{table}
We see that initially increasing the overlap improves the method, but
increasing further does not help much, until we reach the overlap
$\delta=14h$, where a substantial improvement happens. Why is this so?
Looking at Figure \ref{fig:complicatedex}, we see that there are many
highly conductive channels across the interfaces, which means that an
error component will travel with very little damping across these
channels. Following the original maximum principle argument of
Schwarz, the error will thus not diminish substantially, until the
overlap includes the entire channel, at which point rapid convergence
will set in. This is a typical case where the underlying domain
decomposition method has difficulties, and these difficulties can be
transferred into the coarse space. The present multiscale coarse space is however not good enough to handle
these difficulties.

The purpose of our manuscript is to show how one can systematically
transfer such a difficulty into the coarse space by enriching it with
well chosen harmonic functions to obtain a harmonically enriched
multiscale coarse space (HEM), and to prove that with
HEM the method is then robust with respect to the coefficient
jumps. In order to prove our results, we need to base the enrichment
on an eigenvalue problem on the interfaces between the subdomains,
which leads to the spectral harmonically enriched multiscale coarse
space (SHEM).  For SHEM, we have to solve lower
dimensional eigenvalue problems in the construction phase. Because
they are lower dimensional however, our construction process is much
cheaper than the construction based on volume subdomain eigenvalue
problems, Dirichlet to Neumann eigenvalue problems or generalized
eigenvalue problems in the overlap. We then show numerically that one
can obtain an equally good harmonic enrichment without eigenvalue
problems, which we call the non-spectral harmonically enriched
multiscale coarse space (NSHEM).


\section{Preliminaries}

\subsection{The Model Problem}
We consider as our model problem the elliptic boundary value problem
\begin{equation}\label{eq:modelproblem}
  \begin{array}{rcll}
    -\nabla\cdot(\alpha(x)\nabla u)&=&f \qquad &\mbox{in $\Omega$},\\
     u&=&0 &\mbox{on $\partial\Omega$},
  \end{array}
\end{equation}
where $\Omega$ is a bounded convex domain in $\mathbb{R}^2$, $f\in
L^2(\Omega)$ and $\alpha\in L^\infty(\Omega)$ with the property that
$\alpha\geq \alpha_0$ for some positive constant $\alpha_0$.

The weak formulation of (\ref{eq:modelproblem}) is: find $u \in
H^1_0(\Omega)$ such that
\begin{equation}\label{eq:weakformulation}
  a(u,v)=\int_\Omega f v \:dx \quad \forall  v \in H^1_0(\Omega),
\end{equation}
where 
$$
  a(u,v)= \int_{\Omega} \alpha(x)\nabla u\cdot\nabla v \: dx.
$$
We discretize (\ref{eq:weakformulation}) with standard $P_1$ finite
elements on a mesh $\mathcal{T}_h$ of $\Omega$ such that
$$
  \overline{\Omega}=\bigcup_{K\in\mathcal{T}_h}K,
$$
where the finite element space is defined as
$$
  V_h^0(\Omega):=\{v\in C_0(\Omega) : v|_K\in P_1(K)\}.
$$
The discrete problem corresponding to (\ref{eq:weakformulation}) is:
find $u_h\in V_h^0(\Omega)$ such that
\begin{equation} \label{eq:discreteformulation}
  a(u_h,v)=\int_\Omega f v \:dx \quad \forall  v \in V_h^0(\Omega).
\end{equation}
Without loss of generality we will assume that the coefficient $\alpha$ is piecewise constant over each element $K$, i.e., 
$$\alpha(x) = \alpha_K\mbox{ for all } x\in K.$$

We also introduce the following notation: for positive constants $c$
and $C$ independent of the mesh parameters $h$, $H$, the overlap
$\delta$ (which we will define below) and the coefficient $\alpha$,
we define $u\simeq v$, $x\succeq y$ and $w\preceq z$ as
$$
   cu\leq v\leq Cu,\qquad x\geq cy\qquad
    \text{and}\qquad w\leq Cz.
$$
Here, $u,v,x,y,w$ and $z$ denote norms of some functions.

\subsection{Subdomains}

Let $\Omega$ be partitioned into non-overlapping open, connected
Lipschitz polytopes $\{\Omega_i : i=1,\ldots,N\}$ such that
$\overline{\Omega}=\bigcup_{i=1}^N \overline{\Omega}_i$, where each
$\overline\Omega_i$ is assumed to consist of elements from
$\mathcal{T}_h(\Omega)$. We assume that this partitioning is
shape-regular. By extending each subdomain $\Omega_i$ with a distance
$\delta$ in each direction, we create a further decomposition of
$\Omega$ into overlapping subdomains $\{\Omega_i'\}_{i=1}^N$. We
consider here only the specific case of small overlap $\delta=2h$, but a slight modification of the proof will also cover the case of minimal overlap $h$. We
denote the layer of elements in $\Omega_i$ touching the boundary
$\partial\Omega_i$ by $\Omega^h_i$ and assume that the triangles 
corresponding to this layer are shape regular and define the minimum diameter of these triangles as
$h_i:=\min\limits_{K\in\mathcal{T}_h(\Omega_i^h)}h_K$, where $h_K$ is the diameter of the triangle $K$. We define the interface between two subdomains
to be the open edge shared by the subdomains, i.e., $\overline\Gamma_{ij}:=\overline\Omega_i\cap\overline\Omega_j$. The sets of vertices of elements in $\mathcal{T}_h(\Omega)$ (nodal points) belonging to
$\Omega$, $\Omega_i$, $\partial\Omega$, $\partial\Omega_i$ and $\Gamma_{ij}$ are denoted by
$\Omega_{h}$, $\Omega_{ih}$, $\partial\Omega_{h}$, $\partial\Omega_{ih}$ and $\Gamma_{ijh}$. With each interface we define the space of finite element functions restricted to $\Gamma_{ij}$ and zero on $\partial\Gamma_{ij}$ as $V_h^0(\Gamma_{ij})$.

The weighted $L^2$-inner product and the corresponding weighted $L^2$-norm is defined as 
$$
\left(u,v\right)_{L^2_\alpha(G)}:=\left(\alpha(x)u,v\right)_{L^2(G)}\quad\mbox{ and }\quad
\|u\|^2_{L^2_\alpha(G)}:=\left(u,u\right)_{L^2_\alpha(G)},
$$
where $G$ is some domain contained in $\Omega$.

We define the restriction of the bilinear form $a(\cdot,\cdot)$ to an
interface, $\Gamma_{ij}\subset\Gamma$, shared by two subdomains as
\begin{equation*}
a_{\Gamma_{ij}}(u,v):=\left(\alpha_{|\Gamma_{ij}}(x)D_{x^t}u,D_{x^t}v\right)_{L^2(\Gamma_{ij})},
\end{equation*}
where $\alpha_{|\Gamma_{ij}}(x):=\lim\limits_{y\in\Omega_i\rightarrow x}\alpha(y)$ and $D_{x^t}$ denotes the tangent derivative with respect to $\Gamma_{ij}$.  In order to obtain continuous basis functions across subdomain interfaces, we define a second bilinear form on each interface $\Gamma_{ij}$,
\begin{equation*}
\bar{a}_{\Gamma_{ij}}(u,v) :=\left(\overline\alpha_{ij}(x)D_{x^t}u,D_{x^t}v\right)_{L^2(\Gamma_{ij})},
\end{equation*}
where $\overline\alpha_{ij}$ is taken as the maximum of $\alpha_{|\Gamma_{ij}}$ and $\alpha_{|\Gamma_{ji}}$.  An evident, but important relation following directly from this definition is that
\begin{equation}
\label{eq:coeffeqv}
  \bar{a}_{\Gamma_{ij}}(u,v)\simeq a_{\Gamma_{ij}}(u,v)+a_{\Gamma_{ji}}(u,v). 
\end{equation}

The geometric domain decomposition induces a decomposition
of $V_h^0(\Omega)$ into local subspaces: for each $\Omega_i'$,
the corresponding local subspace is 
$$
  V_i:=\left\{v\in V_h^0(\Omega)\, : \,\mathrm{supp}(v)
    \subset\Omega_i'\right\}.
$$ 
This yields a decomposition of $V_h^0(\Omega)$,
$$
  V_h^0(\Omega)=V_0+\sum_{i=1}^N V_i,
$$
where the coarse space, $V_0$, is a special space which we will 
define later. For $i=0,\ldots,N$ we define the projection like
operators $T_i\colon V_h^0(\Omega)\rightarrow V_i$ as
\begin{equation}
   a(T_iu,v):=a(u,v) \qquad\forall v\in V_i,
\end{equation}
and introduce the operator
\begin{equation} \label{eq:precop}
 T:=T_0+T_1+\cdots+T_N.
\end{equation}
This allows us to replace the original problem
(\ref{eq:discreteformulation}) by the equation
\begin{equation}
\label{eq:prcndsystem}
  T u=g,
\end{equation}
where $g=\sum_{i=0}^Ng_i$ and $g_i=T_iu$. Note that $g_i$ can be
computed without knowing the solution $u$ of
(\ref{eq:discreteformulation}).

\subsection{Eigenvalue Problems on Interfaces}

The first new harmonic enrichment of our coarse space is based on
solutions of local eigenvalue problems along the interfaces between
subdomains:
\begin{df}[Generalized Eigenvalue Problem] 
For each interface $\Gamma_{ij}\subset \Gamma$, we define the generalized
eigenvalue problem: Find $\psi$ and $\lambda$, such that
\begin{equation}\label{eq:geneig}
  \bar{a}_{\Gamma_{ij}}(\psi,v):=\lambda b_{\Gamma_{ij}}(\psi,v)\qquad \forall
    v\in V_h^0(\Gamma_{ij}),
\end{equation}
where
$b_{\Gamma_{ij}}(\psi,v):=h_i^{-1}\sum\limits_{k\in\Gamma_{ijh}}\beta_{k}\psi_k v_k$ and $\beta_{k}=\sum\limits_{\substack{K\in\mathcal{T}_h(\Omega)\\k\in \mathrm{dof}(K)}}\alpha_K$.
\label{df:geneig}
\end{df}
The following lemma is a slight modification of Lemma 2.11 in \cite{Spillane:2014:GENEO} and provides important estimates for the eigenfunction projection.
\begin{lem}
\label{lem:geneig}
Define $M:=\mathrm{dim}(V_h^0(\Gamma_{ij}))$ and let the eigenpairs $\{(\psi_{\Gamma_{ij}}^k,\lambda_{ij}^k)\}_{k=1}^{\mathrm{dim}(V_h^0(\Gamma_{ij}))}$ of the generalized eigenvalue problem (\ref{eq:geneig}) be ordered such that 
\begin{equation*}
 0<\lambda_{ij}^1\leq\lambda_{ij}^2\ldots\leq\lambda_{ij}^M\quad \text{and}\quad b_{\Gamma_{ij}}(\psi_{\Gamma_{ij}}^k,\psi_{\Gamma_{ij}}^l)=\delta_{kl}\quad \text{for any}\quad 1\leq k,l\leq M.
\end{equation*}
Then, the projection for any integer $0\leq m_{ij}\leq M$ 
\begin{equation*}
 \Pi_\mathrm{m}v:=\sum_{k=1}^{m_{ij}} b_{\Gamma_{ij}}(v,\psi^k_{\Gamma_{ij}})\psi^k_{\Gamma_{ij}}
\end{equation*}
is $a$-orthogonal, and therefore
\begin{equation}
\label{eq:aorth}
 |\Pi_\mathrm{m}v|_{\bar{a}_{\Gamma_{ij}}}\leq|v|_{\bar{a}_{\Gamma_{ij}}}\quad\text{and}\quad|v-\Pi_\mathrm{m}v|_{\bar{a}_{\Gamma_{ij}}}\leq|v|_{\bar{a}_{\Gamma_{ij}}},\qquad\forall v\in V_h^0(\Gamma_{ij}).
\end{equation}
In addition we also have the approximation estimate 
\begin{equation}
\label{eq:approxeig}
 \|v-\Pi_\mathrm{m}v\|^2_{b_{\Gamma_{ij}}}\leq\frac{1}{\lambda^{m_{ij}+1}_{ij}}|v-\Pi_\mathrm{m}v|^2_{\bar{a}_{\Gamma_{ij}}},\qquad\forall v\in V_h^0(\Gamma_{ij}).
\end{equation}

\end{lem}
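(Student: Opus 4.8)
The plan is to exploit the simultaneous diagonalization of the two symmetric forms $\bar a_{\Gamma_{ij}}$ and $b_{\Gamma_{ij}}$ on the finite-dimensional space $V_h^0(\Gamma_{ij})$. First I would record that $b_{\Gamma_{ij}}$ is symmetric positive definite, since every weight $\beta_k$ is a sum of strictly positive element coefficients, and that $\bar a_{\Gamma_{ij}}$ is symmetric positive definite on $V_h^0(\Gamma_{ij})$: because $\overline\alpha_{ij}\ge\alpha_0>0$, the form $\bar a_{\Gamma_{ij}}(v,v)$ vanishes only when $D_{x^t}v\equiv 0$, i.e. when $v$ is constant along $\Gamma_{ij}$, and the homogeneous condition at $\partial\Gamma_{ij}$ then forces $v=0$. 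Consequently the pencil \eqref{eq:geneig} admits a basis $\{\psi^k_{\Gamma_{ij}}\}_{k=1}^M$ of $V_h^0(\Gamma_{ij})$ that is $b_{\Gamma_{ij}}$-orthonormal, $b_{\Gamma_{ij}}(\psi^k_{\Gamma_{ij}},\psi^l_{\Gamma_{ij}})=\delta_{kl}$, with all eigenvalues $\lambda^k_{ij}>0$, and which is automatically $\bar a_{\Gamma_{ij}}$-orthogonal with $\bar a_{\Gamma_{ij}}(\psi^k_{\Gamma_{ij}},\psi^l_{\Gamma_{ij}})=\lambda^k_{ij}\delta_{kl}$.

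Next, for an arbitrary $v\in V_h^0(\Gamma_{ij})$ I would use the expansion $v=\sum_{k=1}^M c_k\psi^k_{\Gamma_{ij}}$ with $c_k=b_{\Gamma_{ij}}(v,\psi^k_{\Gamma_{ij}})$, so that by the very definition of $\Pi_\mathrm{m}$ one has $\Pi_\mathrm{m}v=\sum_{k=1}^{m_{ij}}c_k\psi^k_{\Gamma_{ij}}$ and $v-\Pi_\mathrm{m}v=\sum_{k=m_{ij}+1}^{M}c_k\psi^k_{\Gamma_{ij}}$. The $\bar a_{\Gamma_{ij}}$-orthogonality of the projection is then immediate, since $\bar a_{\Gamma_{ij}}(\Pi_\mathrm{m}v,v-\Pi_\mathrm{m}v)=\sum_{k\le m_{ij}}\sum_{l>m_{ij}}c_kc_l\lambda^k_{ij}\delta_{kl}=0$ because the two index ranges are disjoint. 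From this the Pythagoras identity $|v|^2_{\bar a_{\Gamma_{ij}}}=|\Pi_\mathrm{m}v|^2_{\bar a_{\Gamma_{ij}}}+|v-\Pi_\mathrm{m}v|^2_{\bar a_{\Gamma_{ij}}}$ follows, and since both terms on the right are nonnegative this delivers the two stability bounds in \eqref{eq:aorth} simultaneously.

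For the approximation estimate \eqref{eq:approxeig} I would simply compute both sides from the expansion: $\|v-\Pi_\mathrm{m}v\|^2_{b_{\Gamma_{ij}}}=\sum_{k=m_{ij}+1}^{M}c_k^2$ and $|v-\Pi_\mathrm{m}v|^2_{\bar a_{\Gamma_{ij}}}=\sum_{k=m_{ij}+1}^{M}\lambda^k_{ij}c_k^2$, and then invoke the ordering $\lambda^k_{ij}\ge\lambda^{m_{ij}+1}_{ij}$ valid for every $k\ge m_{ij}+1$ to get $|v-\Pi_\mathrm{m}v|^2_{\bar a_{\Gamma_{ij}}}\ge\lambda^{m_{ij}+1}_{ij}\|v-\Pi_\mathrm{m}v\|^2_{b_{\Gamma_{ij}}}$, which is \eqref{eq:approxeig} after dividing by $\lambda^{m_{ij}+1}_{ij}$; the borderline cases $m_{ij}=0$ (trivial) and $m_{ij}=M$ (both sides vanish) need only a one-line comment. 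I do not expect any genuine obstacle here: the argument is the standard diagonalization machinery for a pencil of two symmetric forms, and the only step that requires a moment's care is the verification that $\bar a_{\Gamma_{ij}}$ is truly positive definite on $V_h^0(\Gamma_{ij})$, which is what legitimizes the simultaneous diagonalization and the strict positivity of the eigenvalues $\lambda^k_{ij}$.
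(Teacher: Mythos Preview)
Your proposal is correct and follows essentially the same approach as the paper's proof: both argue that $\bar a_{\Gamma_{ij}}$ and $b_{\Gamma_{ij}}$ are symmetric positive definite on $V_h^0(\Gamma_{ij})$, invoke simultaneous diagonalization to obtain a $b$-orthonormal and $\bar a$-orthogonal eigenbasis, derive the Pythagoras identity for \eqref{eq:aorth}, and then compare the two norms of the tail $\sum_{k>m_{ij}}c_k\psi^k_{\Gamma_{ij}}$ using the eigenvalue ordering to get \eqref{eq:approxeig}. Your write-up is in fact slightly more explicit than the paper's in justifying the positive definiteness of $\bar a_{\Gamma_{ij}}$ via the zero boundary condition on $\partial\Gamma_{ij}$.
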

\begin{proof}
 Following the lines of the proof given in \cite[Lemma~2.11]{Spillane:2014:GENEO}, we start out by recognizing that since both $\bar{a}_{\Gamma_{ij}}$ and $b_{\Gamma_{ij}}$ are positive definite, we may reduce the generalized eigenvalue problem to a standard eigenvalue problem, where standard spectral theory guarantees the existence of eigenpairs $\{(\psi_{\Gamma_{ij}}^k,\lambda_{ij}^k)\}_{k=1}^{\mathrm{dim}(V_h^0(\Gamma_{ij}))}$, for which the eigenvalues $\{\lambda_{ij}^k\}_{k=1}^{\mathrm{dim}(V_h^0(\Gamma_{ij}))}$ are positive. In addition, we may choose the eigenvectors such that they form a basis of $V_h^0(\Gamma_{ij})$ and satisfy the orthogonality conditions
 $$
 \bar{a}_{\Gamma_{ij}}(\psi_{\Gamma_{ij}}^k,\psi_{\Gamma_{ij}}^l)=b_{\Gamma_{ij}}(\psi_{\Gamma_{ij}}^k,\psi_{\Gamma_{ij}}^l)=0\quad\forall k\neq l,\quad |\psi_{\Gamma_{ij}}^k|_{\bar{a}_{\Gamma_{ij}}}^2=\lambda^k_{ij}\text{ and } \|\psi_{\Gamma_{ij}}^k\|_{b_{\Gamma_{ij}}}^2=1.
 $$
Now, any $v\in V_h^0(\Gamma_{ij}))$ may be expressed as
$$
v=\sum_{k=1}^{\mathrm{dim}(V_h^0(\Gamma_{ij}))} b_{\Gamma_{ij}}(v,\psi^k_{\Gamma_{ij}})\psi^k_{\Gamma_{ij}}.
$$
The $\bar{a}_{\Gamma_{ij}}$-orthogonality states that 
$$
\left|\sum b_{\Gamma_{ij}}(v,\psi^k_{\Gamma_{ij}})\psi^k_{\Gamma_{ij}}\right|_{\bar{a}_{\Gamma_{ij}}}^2 =\sum b_{\Gamma_{ij}}(v,\psi^k_{\Gamma_{ij}})^2\left|\psi^k_{\Gamma_{ij}}\right|_{\bar{a}_{\Gamma_{ij}}}^2.
$$
From this we have 
$$
|v|_{\bar{a}_{\Gamma_{ij}}}^2=|\Pi_\mathrm{m}v|_{\bar{a}_{\Gamma_{ij}}}^2+|v-\Pi_\mathrm{m}v|_{\bar{a}_{\Gamma_{ij}}}^2,
$$
and (\ref{eq:aorth}) follows directly. To prove (\ref{eq:approxeig}), we start by using the $b_{\Gamma_{ij}}$ - orthonormality of the eigenfunctions 
\begin{eqnarray*}
 \|v-\Pi_\mathrm{m}v\|_{b_{\Gamma_{ij}}}^2&=&\left\|\sum_{k=m_{ij}+1}^{\mathrm{dim}(V_h^0(\Gamma_{ij}))} b_{\Gamma_{ij}}(v,\psi^k_{\Gamma_{ij}})\psi^k_{\Gamma_{ij}}\right\|_{b_{\Gamma_{ij}}}^2\\
 &=&\sum_{k=m_{ij}+1}^{\mathrm{dim}(V_h^0(\Gamma_{ij}))} b_{\Gamma_{ij}}(v,\psi^k_{\Gamma_{ij}})^2\frac{1}{\lambda^k_{ij}}|\psi_{\Gamma_{ij}}^k|_{\bar{a}_{\Gamma_{ij}}}^2\\
 &\leq&\frac{1}{\lambda^{m_{ij}+1}_{ij}}\sum_{k=m_{ij}+1}^{\mathrm{dim}(V_h^0(\Gamma_{ij}))} b_{\Gamma_{ij}}(v,\psi^k_{\Gamma_{ij}})^2|\psi_{\Gamma_{ij}}^k|_{\bar{a}_{\Gamma_{ij}}}^2\\
 &=&\frac{1}{\lambda^{m_{ij}+1}_{ij}}|v-\Pi_\mathrm{m}v|^2_{\bar{a}_{\Gamma_{ij}}}.
\end{eqnarray*}
In the last steps we used the fact that $|\psi_{\Gamma_{ij}}^k|_{\bar{a}_{\Gamma_{ij}}}^2=\lambda^k_{ij}$, that the eigenvalues are ordered in increasing order and that the eigenfunctions are $\bar{a}_{\Gamma_{ij}}$-orthogonal.
\end{proof}

\section{Construction of the Coarse Space}

In this section we define the coarse space for our method. The coarse
space is a multiscale finite element space enriched with harmonic
functions based on the generalized eigenvalue problem
(\ref{eq:geneig}) defined on each interface shared by two subdomains.
In order to explain the construction in detail, we start by introducing
discrete harmonic functions before we describe how the coarse space is constructed.

\subsection{Discrete Harmonic Functions}

For each non-overlapping subdomain $\Omega_i$ we define the
restriction of $V_h$ to $\bar\Omega_i$ as
$$
  V_h(\Omega_i):=\left\{v_{|\bar\Omega_i}: v\in V_h\right\},
$$
and the corresponding subspace with zero Dirichlet boundary conditions as
$$
  V_h^0(\Omega_i):=\left\{v\in V_h(\Omega_i):v(x)=0 \quad \mbox{for} \quad
x\in\partial\Omega_{ih}\right\}.
$$ 
Clearly $V_h^0(\Omega_i)\subset V_h(\Omega_i)$. Now let $\mathcal{P}_i: V_h(\Omega_i)\rightarrow V_h^0(\Omega_i)$
be the $a$-orthogonal projection of a function $u\in V_h$ onto $V_h^0(\Omega_i)$
defined by
\begin{equation}
 a_i(\mathcal{P}_iu,v):=a_i(u,v)\qquad \forall v\in V_h^0(\Omega_i),
\end{equation}
and define $\mathcal{H}_iu:=u-\mathcal{P}_iu$ as the discrete harmonic counterpart of $u$,
i.e.
\begin{eqnarray}
\label{eq:discreteharmonic}
  a_i (\mathcal{H}_i u,v)&=&0 \qquad  \forall v \in V_h^0(\Omega_i),\\
  \mathcal{H}_iu(x)&=&u(x) \qquad x \in \partial\Omega_{ih}.
\end{eqnarray}
A function $u\in V_h(\Omega_i)$ is locally discrete harmonic if $\mathcal{H}_iu=u$. If all
restrictions to subdomains of a function $u\in V_h$ are locally
discrete harmonics, i.e.,
$$
  u_{|\Omega_i}=\mathcal{H}_i u_{|\Omega_i}\qquad \text{for } i=1,\ldots,N,
$$ 
then we say $u$ is a discrete harmonic function. For any function $u\in V_h$, this gives a decomposition of $u$ into
discrete harmonic parts and local projections, i.e. $u=\mathcal{H}u+\mathcal{P}u$. 

With the operator $\mathcal{H}$, we can introduce the space of discrete harmonic functions as $$\tilde V_h=\mathcal{H}V_h=\{u \in V_h^0(\Omega): u_{|\Omega_i}=\mathcal{H}_i u_{|\Omega_i}\}.$$ For a two level method that considers a non-overlapping partitioning of $\Omega$ as subdomains, this space is the optimal coarse space. We will in the following sections show how one can approximate this space in a systematic manner by carefully including into the coarse space the most important components of $\tilde V_h$ not already included in our initial multiscale finite element coarse space. In the limit, these enrichment strategies will reach the optimal coarse space (OHEM) $\tilde V_h$, and allow us to turn the method into a direct solver. This is an important property of our strategy. 

With this in mind, we are now ready to define the coarse basis functions.

\subsection{Multiscale Basis Functions}
The multiscale finite element coarse space, which we will consider as our initial coarse space for our method, consists of vertex based discrete harmonic functions associated with each vertex of the polytope $\Omega_i$. More formally, for each vertex $x^k$ of the polytope $\Omega_i$ and for each internal edge $\Gamma_{ij} \subset \Omega_i$ that touches $x^k$, we solve the 1D boundary value problem
\begin{equation*}
  \begin{array}{rclll}
     \bar{a}_{\Gamma_{ij}}(\phi_{ik}^{ms},v)&=&0
      \quad&\forall v\in V_h^0(\Gamma_{ij}),& \mbox{(interface values)}\\
\phi_{ik}^{ms}(x^k)&=&1& \text{at the vertex $x^k$ of $\Omega_i$},\\
\phi_{ik}^{ms}&=&0& \text{at the other vertex}. 
  \end{array}
\end{equation*}
Then we extend $\phi_{ik}^{ms}$ inside $\Omega_i$ using the values of $\phi_{ik}^{ms}$ as boundary conditions on the edges touching $x^k$ and zero on the remaining edges
\begin{equation} \label{eq:discreteharmonicmultiscale}
  \begin{array}{rclll}
  a_i (\phi_{ik}^{ms},v)&=&0& \forall v \in V_h^0(\Omega_i),&\mbox{(harmonic extension inside)}\\
   \phi_{ik}^{ms}(x)&=&0& x\in\partial\Omega_i\setminus\Gamma_{ij},\quad x^k\in\partial\Gamma_{ij}.&\\
  \end{array}
\end{equation}
The solutions corresponding to each $k$ are then glued together in a natural manner to form multiscale hat functions and extended with zero to the rest of $\Omega$.
\subsection{Spectral Basis Functions} 
\begin{figure}
  \centering
  \includegraphics[width=0.5\textwidth]{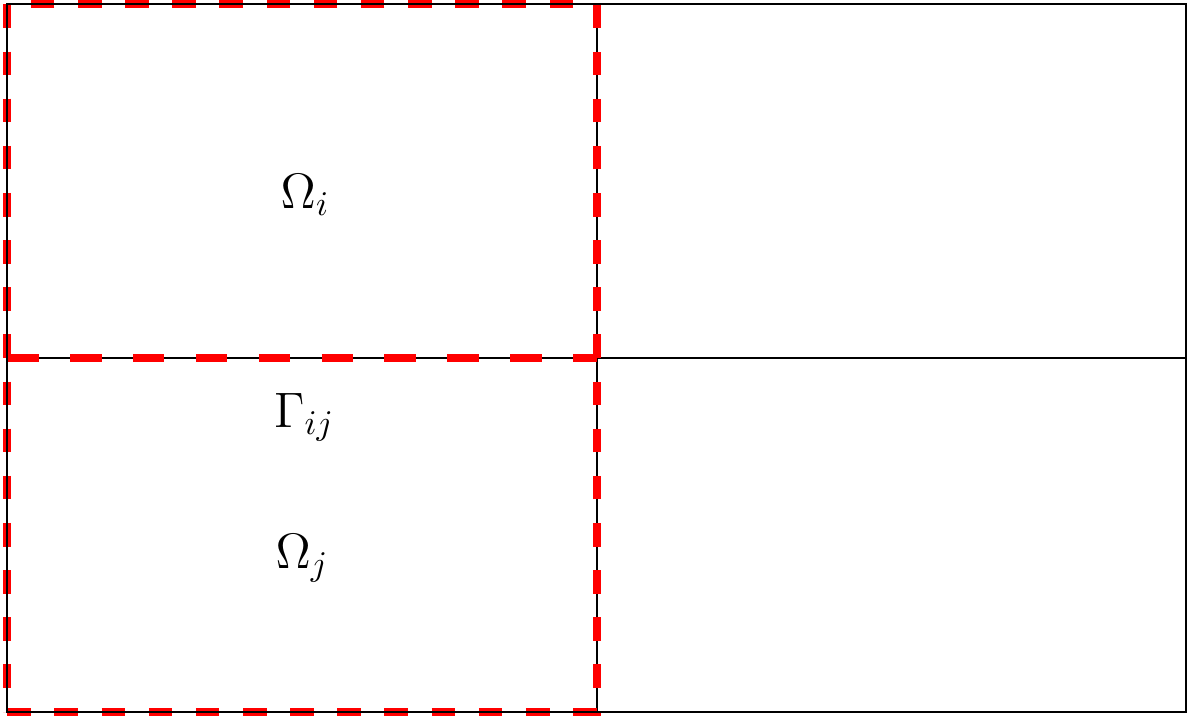}
  \caption{Support of a spectral basis function $p^k_{\Gamma_{ij}}$
    corresponding to the interface $\Gamma_{ij}$.}
\label{fig:geneiginterface}
\end{figure}
Let $\psi^k_{\Gamma_{ij}}$ be the $k$-th eigenfunction of the generalized
eigenvalue problem (\ref{eq:geneig}) on each interface
$\Gamma_{ij}$. We then define the spectral basis functions associated
with the interface $\Gamma_{ij}$ as the discrete harmonic extension of
each of the eigenfunctions $\psi^k_{\Gamma_{ij}}$
\begin{equation}\label{eq:discreteharmonicspectral}
  \begin{array}{rcll}
  a_i (p^k_{\Gamma_{ij}},v)&=&0 \quad&  \forall v \in V_h^0(\Omega_i),\\
  p^k_{\Gamma_{ij}}&=&\psi^k_{\Gamma_{ij}}& x\in\Gamma_{ij},\\
p^k_{\Gamma_{ij}}&=&0& x\in\left(\partial\Omega_i\cup\partial\Omega_j\right)\setminus\Gamma_{ij}.
  \end{array}
\end{equation}
Each solution is then extended with zero to the rest of $\Omega$ in order to make the functions global.

The spectral harmonically enriched multiscale coarse space (SHEM) is
then defined as the span of these two sets of basis functions,
\begin{equation}\label{SHEM}
  V_0:=\mathrm{span}\{\{\phi_k^{ms}\}_{k=1}^{n_\nu}\cup\{\{p^l_{\Gamma_{ij}}\}_{l=1}^{m_{ij}}\}_{\Gamma_{ij}\subset\Gamma}\}.
\end{equation}
An example of the first and second order spectral basis function is given in Figure~\ref{figs:spectral} for a problem with $\alpha=1$ and mesh parameters $H=1/2$ and $h=1/32$.
\begin{figure}
\centering
\mbox{\includegraphics[width=0.5\textwidth]{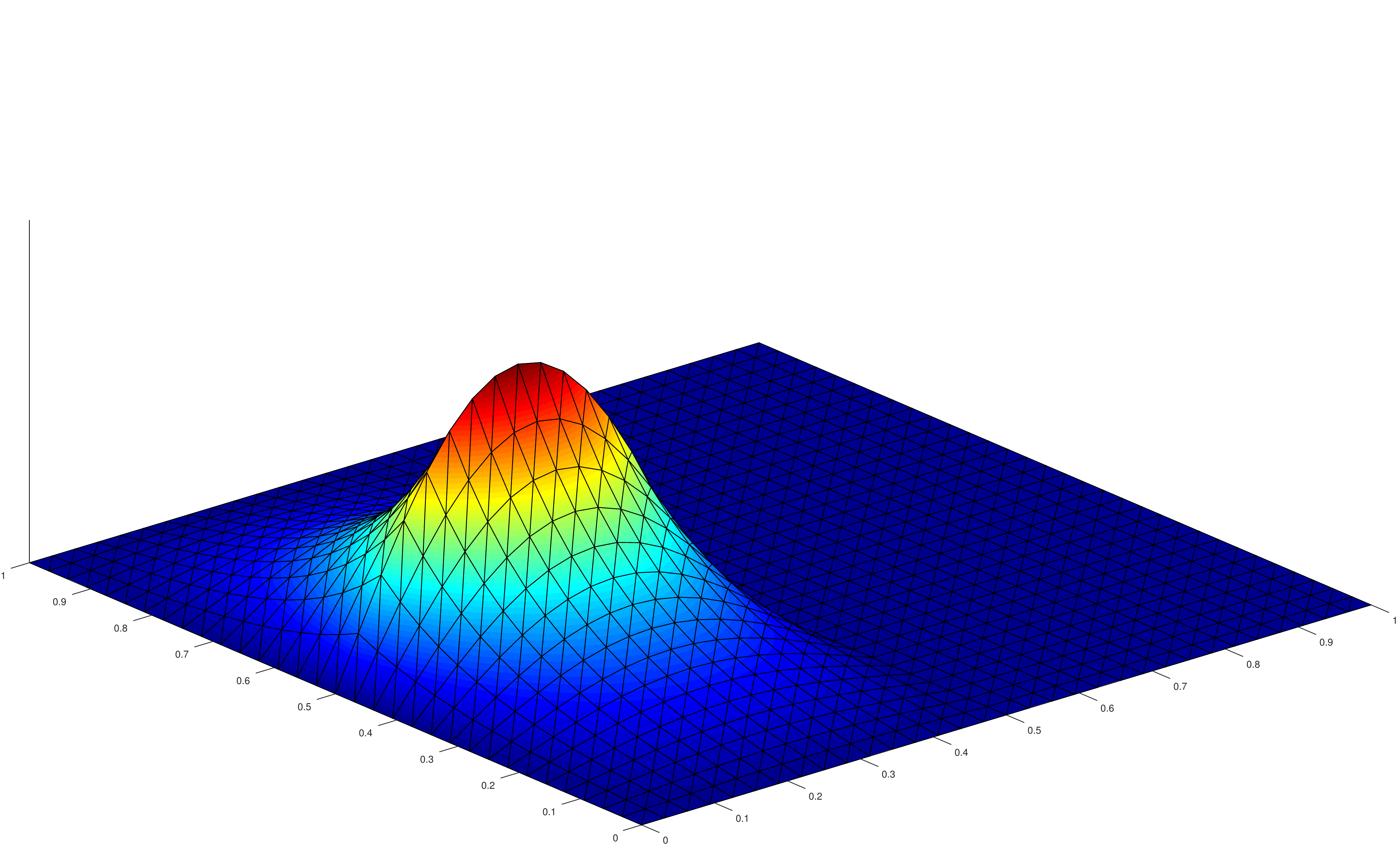}
\includegraphics[width=0.5\textwidth]{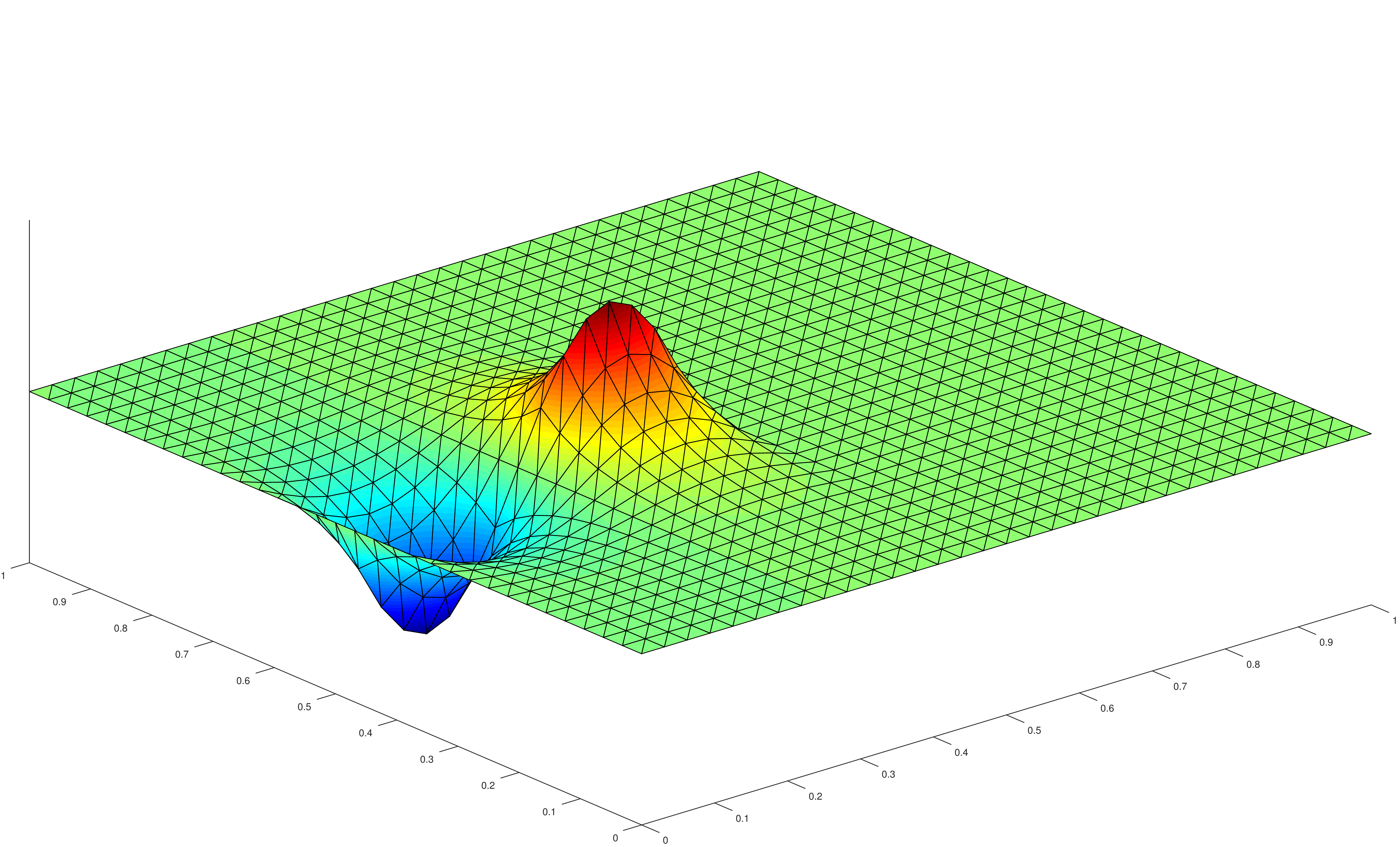}
}
\caption{Plot of the spectral basis functions for $h=1/32$ and $H=16h$. \textit{Left:} first order spectral basis functions. \textit{Right:} second order spectral basis functions.}
\label{figs:spectral}
\end{figure}

\section{Analysis of the Preconditioner}

In this section we define a suitable interpolation operator into the
SHEM coarse space $V_0$ and provide the necessary bounds for the operator
in order to apply the classical abstract Schwarz
framework. The aforementioned interpolation operator is a composition
of the standard multiscale interpolation operator and a new
interpolation operator for the spectral basis functions.

The multiscale interpolation operator is defined as
\begin{equation}\label{eq:multint}
   I_\mathrm{ms}u:=\sum_{k=1}^{n_\nu} u(x^k)\phi_k^\mathrm{ms},
\end{equation}
where $n_\nu$ is the number of internal vertices, i.e. vertices of the polytope $\Omega_i$ which are not on $\partial\Omega$. The spectral interpolation operator is defined as 
\begin{equation}
 \label{eq:specint}
\Pi_\mathrm{m}u:=\sum_{\Gamma_{ij}\subset\Gamma}\sum_{k=1}^{m_{ij}} b_{\Gamma_{ij}}(u_{\Gamma_{ij}},p^k_{\Gamma_{ij}})p^k_{\Gamma_{ij}}.
\end{equation}
Combining these two, the new coarse space interpolation operator $I_0\colon V_h^0(\Omega)\rightarrow V_0$ is defined as
\begin{equation}
 \label{eq:coarseint}
I_0u:=I_\mathrm{ms}u+\Pi_\mathrm{m}\left(u-I_\mathrm{ms}u\right).
\end{equation}

\begin{lem}[Stable Decomposition]
\label{lem:stbldec}
 For all $u\in V_h$ there exists a representation $u=\sum_{i=0}^Nu_i$ such that 
 \begin{equation}
 \label{eq:stldec}
  a(u_0,u_0)+\sum_{i=1}^N a(u_i,u_i)\preceq C_0^2 a(u,u),
 \end{equation}
where $u_0=I_0u$, $C_0^2\simeq \left(1+\frac{1}{\lambda_{m+1}}\right)$ and $\lambda_{m+1}:=\min\limits_{i}\min\limits_{\Gamma_{ij}\subset\partial\Omega_i}\lambda_{m_{ij}+1}^{ij}$.
\end{lem}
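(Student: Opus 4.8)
The plan is to build the stable decomposition on the classical harmonic/bubble splitting, to push the ``difficult'' component into the coarse space through Lemma~\ref{lem:geneig}, and to exploit the minimal overlap $\delta=2h$ so that no $\log(H/h)$ factor ever appears. First I would write $u=\mathcal{H}u+\mathcal{P}u$ with $\mathcal{P}u=\sum_{i=1}^N\mathcal{P}_iu$, $\mathcal{P}_iu\in V_h^0(\Omega_i)\subset V_i$. Since $I_\mathrm{ms}$ and $\Pi_\mathrm{m}$ only read vertex and interface nodal values, and $\mathcal{P}u$ vanishes on every $\partial\Omega_i$, we have $I_0u=I_0\mathcal{H}u$; set $u_0:=I_0u$, which is discrete harmonic, and $z:=\mathcal{H}u-u_0$. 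Then $z$ is discrete harmonic, vanishes on $\partial\Omega$ and at every subdomain vertex, and on each interface $z|_{\Gamma_{ij}}=(I-\Pi_\mathrm{m})w_{ij}$ with $w_{ij}:=(\mathcal{H}u-I_\mathrm{ms}\mathcal{H}u)|_{\Gamma_{ij}}\in V_h^0(\Gamma_{ij})$. Picking a partition of unity $\{\chi_i\}$ subordinate to $\{\Omega_i'\}$ with $|\nabla\chi_i|\preceq h^{-1}$ supported in the $O(h)$ overlap strip, I set $u_i:=\mathcal{P}_iu+I_h(\chi_iz)\in V_i$ for $i\ge1$ (with $I_h$ the nodal interpolant); then $u_0+\sum_{i\ge1}u_i=I_0\mathcal{H}u+\mathcal{P}u+z=u$, so this is an admissible decomposition.

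The heart of the proof is the estimate $a(z,z)\preceq\lambda_{m+1}^{-1}\,a(u,u)$, obtained by chaining three facts. (i) On $\Gamma_{ij}$, $I_\mathrm{ms}\mathcal{H}u$ is the $\bar a_{\Gamma_{ij}}$-harmonic function with the same endpoint values as $\mathcal{H}u|_{\Gamma_{ij}}=u|_{\Gamma_{ij}}$, hence $\bar a_{\Gamma_{ij}}$-energy minimal, so $|w_{ij}|^2_{\bar a_{\Gamma_{ij}}}\preceq\bar a_{\Gamma_{ij}}(u,u)$, and by (\ref{eq:coeffeqv}) together with a discrete trace inequality on the boundary layer, $\bar a_{\Gamma_{ij}}(u,u)\preceq h^{-1}(a_{\Omega_i^h}(u,u)+a_{\Omega_j^h}(u,u))$. (ii) By (\ref{eq:approxeig}) and (\ref{eq:aorth}) of Lemma~\ref{lem:geneig}, together with $\lambda_{m_{ij}+1}^{ij}\ge\lambda_{m+1}$, one gets $\|z|_{\Gamma_{ij}}\|^2_{b_{\Gamma_{ij}}}=\|(I-\Pi_\mathrm{m})w_{ij}\|^2_{b_{\Gamma_{ij}}}\le\lambda_{m+1}^{-1}|w_{ij}|^2_{\bar a_{\Gamma_{ij}}}$. (iii) Since $z$ is discrete harmonic in $\Omega_i$, $a_i(z,z)\le a_i(\zeta_i,\zeta_i)$ for the function $\zeta_i\in V_h(\Omega_i)$ that equals $z$ on $\partial\Omega_{ih}$ and vanishes at all nodes at distance $\ge h$ from $\partial\Omega_i$; it is supported in the one-element layer $\Omega_i^h$, so the inverse inequality gives $a_i(\zeta_i,\zeta_i)\preceq h^{-2}\|\zeta_i\|^2_{L^2_\alpha(\Omega_i^h)}\preceq h^{-1}\|z|_{\partial\Omega_i}\|^2_{L^2_\alpha(\partial\Omega_i)}\simeq h\sum_{\Gamma_{ij}\subset\partial\Omega_i}b_{\Gamma_{ij}}(z|_{\Gamma_{ij}},z|_{\Gamma_{ij}})$, the powers of $h$ matching because $b_{\Gamma_{ij}}\simeq h^{-2}\|\cdot\|^2_{L^2_{\overline\alpha_{ij}}(\Gamma_{ij})}$ by the $h_i^{-1}$ normalization in Definition~\ref{df:geneig}. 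Combining (iii)$\to$(ii)$\to$(i) and summing over $i$ with the finite overlap of the layers yields $a(z,z)=\sum_i a_i(z,z)\preceq\lambda_{m+1}^{-1}a(u,u)$.

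It then remains to assemble the bound. For the coarse term, $u_0=u-\mathcal{P}u-z$, so $a(u_0,u_0)\preceq a(u,u)+a(\mathcal{P}u,\mathcal{P}u)+a(z,z)\preceq(1+\lambda_{m+1}^{-1})a(u,u)$, using $a(\mathcal{P}u,\mathcal{P}u)\le a(u,u)$, which follows from $a(\mathcal{H}u,\mathcal{P}u)=\sum_i a_i(\mathcal{H}_iu,\mathcal{P}_iu)=0$. For the local terms, $\sum_{i\ge1}a(u_i,u_i)\preceq a(\mathcal{P}u,\mathcal{P}u)+\sum_i a(I_h(\chi_iz),I_h(\chi_iz))$, and the standard partition-of-unity product estimate gives $\sum_i a(I_h(\chi_iz),I_h(\chi_iz))\preceq a(z,z)+h^{-2}\sum_i\|z\|^2_{L^2_\alpha(S_i)}$ with $S_i$ the $O(h)$ overlap strip. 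On the portion of $S_i$ lying in a neighbour $\Omega_j$, the strip estimate $\|z\|^2_{L^2_\alpha(S_i\cap\Omega_j)}\preceq h\|z|_{\Gamma_{ij}}\|^2_{L^2_{\overline\alpha_{ij}}(\Gamma_{ij})}+h^2 a_j(z,z)$ turns the overlap term into the interface quantities from (ii)--(i) plus $a(z,z)$, hence $h^{-2}\sum_i\|z\|^2_{L^2_\alpha(S_i)}\preceq\lambda_{m+1}^{-1}a(u,u)$. Adding the two contributions gives (\ref{eq:stldec}) with $C_0^2\simeq1+\lambda_{m+1}^{-1}$, and note that when $m_{ij}$ reaches $\dim V_h^0(\Gamma_{ij})$ one has $z\equiv0$, recovering the optimal (direct-solver) case.

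The main obstacle is precisely the absence of a $\log(H/h)$ factor in $C_0^2$: bounding the energy of the discrete harmonic extension of $z|_{\Gamma_{ij}}$ through a fractional $H^{1/2}_{00}$-norm of its trace — the usual route — would cost such a logarithm. This is sidestepped by never touching a fractional trace norm and instead invoking the energy minimality of discrete harmonic functions against a test function living in the \emph{minimal-overlap} layer $\Omega_i^h$, converting $H^1$-energy into $L^2$-energy by an inverse inequality. The whole argument then reduces to a careful, $\alpha$-uniform matching of the powers of $h$ coming from that inverse inequality, the discrete trace inequality, the $L^2$--$\ell^2$ nodal equivalence on $\Gamma_{ij}$, and the $h_i^{-1}$ weight in $b_{\Gamma_{ij}}$ (together with the fact that $\beta_k$ is of order $\overline\alpha_{ij}$ near the interface); this bookkeeping, done uniformly in the coefficient, is the delicate part.
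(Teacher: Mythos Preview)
Your proof is correct and follows essentially the same strategy as the paper: both avoid fractional trace norms (and hence the $\log(H/h)$ factor) by invoking the energy minimality of discrete harmonic functions against a competitor supported in the one-element boundary layer $\Omega_i^h$, then feed the resulting $b_{\Gamma_{ij}}$-norm through Lemma~\ref{lem:geneig} and the $\bar a_{\Gamma_{ij}}$-stability of $I_\mathrm{ms}$, and finally pass from the interface energy $\bar a_{\Gamma_{ij}}(u,u)$ back to $a_{\Omega_i^h}(u,u)+a_{\Omega_j^h}(u,u)$ via the elementwise tangential-derivative bound. The only organizational difference is that you explicitly split $u=\mathcal{H}u+\mathcal{P}u$ and put $\mathcal{P}_iu$ directly into $u_i$, working with the purely harmonic remainder $z=\mathcal{H}u-u_0$, whereas the paper keeps $w=u-u_0$ in one piece and sets $u_i=I_h(\theta_i w)$; since $w=z+\mathcal{P}u$ and $\theta_i\equiv1$ on the interior nodes of $\Omega_i$, the two decompositions differ only in how the bubble part is distributed across neighbouring $u_i$ in the overlap, and the estimates are the same. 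One small caution: your parenthetical remark that ``$\beta_k$ is of order $\overline\alpha_{ij}$'' is not quite right in general (a high-$\alpha$ element can touch the interface at a single node, making $\beta_k\gg\overline\alpha_{ij}$), but this does not damage the argument because every inequality you actually use runs in the direction $\alpha_K\le\beta_k$ and $\overline\alpha_{ij}\preceq\beta_k$, both of which hold by the definition of $\beta_k$.
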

\begin{proof}
Let $w:=u-u_0$ and define $u_i:=I_h(\theta_iw)$, where $I_h$ is the nodal piecewise linear continuous interpolation operator on the fine mesh $\mathcal{T}_ h(\Omega)$ and $\theta_i$ is a partition of unity function with respect to the partition $\{\Omega_i'\}_{i=1}^N$. $\theta_i$ is zero on $\Omega\setminus\Omega_i'$ and $\theta(x)=1$ for $x\in\Omega_{ih}$, $\theta_i(x)=1/2$ for $x\in\Gamma_{ijh}$, and $\theta_i(x)=1$ when $x=x^k$. 

We start by estimating
$a(u_0,u_0)=\sum_{i=1}^Na_i(u_0,u_0)$. From the energy minimizing
property of discrete harmonic functions we have for all $v\in
V_h(\Omega_i)$ where $v=u_0$ on the boundary of $\partial\Omega_i$ and
$v=u$ on $\Omega_{ih}$ that
\begin{eqnarray*}
  a_i(u_0,u_0)&\leq& a_i(v,v)= a_i(v-u+u,v-u+u)\\
  &\leq&2a_i(v-u,v-u)+2a_i(u,u).
\end{eqnarray*}
Using the fact that $u-v$ is zero on $\Omega_{ih}$ we then get 
\begin{equation}
 a_i(v-u,v-u)\preceq\sum_{\Gamma_{ij}\subset\partial\Omega_i}h_i\|u-u_0\|^2_{b_{\Gamma_{ij}}}.
\end{equation}
By applying Lemma~\ref{lem:geneig} we have for each edge
$\Gamma_{ij}\subset\partial\Omega_i$ that
\begin{eqnarray}
 \|u-u_0\|^2_{b_{\Gamma_{ij}}}&=&\|(u-I_\mathrm{ms}u)-\Pi_\mathrm{m}(u-I_\mathrm{ms}u)\|^2_{b_{\Gamma_{ij}}}\nonumber\\
 &\preceq&\frac{1}{\lambda^{m_{ij}+1}_{ij}}|(u-I_\mathrm{ms}u)-\Pi_\mathrm{m}(u-I_\mathrm{ms}u)|_{\bar{a}_{\Gamma_{ij}}}^2\nonumber\\
 &\preceq& \frac{1}{\lambda^{m_{ij}+1}_{ij}}|u-I_\mathrm{ms}u|_{\bar{a}_{\Gamma_{ij}}}^2\preceq \frac{1}{\lambda^{m_{ij}+1}_{ij}}|u|_{\bar{a}_{\Gamma_{ij}}}^2.
\end{eqnarray}
The last inequality follows straightforwardly from Lemma~\ref{lem:geneig} and the $a$-stability property of the 1D multiscale operator. By defining  $\lambda_{m_i+1}^i:=\min\limits_{\Gamma_{ij}\subset\partial\Omega_i}\lambda_{m_{ij}+1}^{ij}$ and using (\ref{eq:coeffeqv}) we have that
\begin{eqnarray}
 \sum_{\Gamma_{ij}\subset\partial\Omega_i}\frac{h_i}{\lambda^{m_{ij}+1}_{ij}}|u|_{\bar{a}_{\Gamma_{ij}}}^2&\simeq&\sum_{\Gamma_{ij}\subset\partial\Omega_i}\frac{h_i}{\lambda^{m_{ij}+1}_{ij}}\left(|u|_{a_{\Gamma_{ij}}}^2+|u|_{a_{\Gamma_{ji}}}^2\right)\nonumber\\
 &\preceq&\frac{1}{\lambda_{m_i+1}^{i}}\left(|u|_{a,\Omega_i^h}^2+\sum_{\Gamma_{ij}\subset\partial\Omega_i}|u|_{a,\Omega_j^h}^2\right)\nonumber\\
 &\leq&\frac{1}{\lambda_{m_i+1}^{i}}\left(|u|_{a,\Omega_i}^2+\sum_{\Gamma_{ij}\subset\partial\Omega_i}|u|_{a,\Omega_j}^2\right),
\end{eqnarray}
where in the second inequality above we extend the estimate from the boundary to the subdomain layer, while in the last inequality we extend from the subdomain layer to the whole of the subdomain. Then, by summing over each subdomain and defining $\lambda_{m+1}:=\min\limits_{i}\lambda_{m_i+1}^{i}$ completes the first part of the proof.

Now we need to provide the same type of bound for the local terms $u_i$, i.e. 
\begin{equation}
 \sum_{i=1}^Na(u_i,u_i)\preceq \frac{1}{\lambda_{m+1}}a(u,u).
\end{equation}
Since we are only considering two layers of overlap and by adding and subtracting $u-u_0$ we have
\begin{eqnarray}
a(u_i,u_i)&=&a_{\Omega_i'\setminus\overline\Omega_i}(u_i,u_i)+a_{\Omega_i}(u_i-(u-u_0)+(u-u_0),u_i-(u-u_0)+(u-u_0))\nonumber\\ 
&\leq&a_{\Omega_i'\setminus\overline\Omega_i}(u_i,u_i)+ 2a_{\Omega_i}(u-u_0,u-u_0)+2a_{\Omega_i}(u_i-(u-u_0),u_i-(u-u_0))\nonumber\\ 
 &\simeq& a_{\Omega_i}(u-u_0,u-u_0)+\sum_{\Gamma_{ij}\subset\partial\Omega_i}h_i\|u-u_0\|^2_{b_{\Gamma_{ij}}}\nonumber.
\end{eqnarray}
The last term in the inequality above has already been estimated so finally we have that
\begin{equation}
 \sum_{i=0}^Na(u_i,u_i)\preceq \left(1+\frac{1}{\lambda_{m+1}}\right)a(u,u),
\end{equation}
which completes the proof of (\ref{eq:stldec}).

\end{proof}
\begin{remark}
The theoretical results developed in this section easily extend to the case of minimal overlap $h$. The only modifications needed in the proof of Lemma~\ref{lem:stbldec} are for the local components $a(u_i,u_i)$. Instead of splitting the overlapping subdomain $\Omega_i'$ into $\Omega_i'\setminus\overline\Omega_i$ and $\Omega_i$, one instead splits the overlapping zone of $\Omega_i'$, i.e., the part that is shared by two or more subdomains, into the part outside of the coarse grid elements and the part that is inside the coarse grid elements and then proceed in a similar fashion as in the proof above. 
\end{remark}
\begin{thm}
\label{thm:mainthm}
 The condition number of the two level Schwarz operator (\ref{eq:precop}) with the SHEM coarse space (\ref{SHEM}) can be bounded by 
 \begin{equation}
  \kappa(\mathbf{TA})\preceq \omega C_0^2(\rho(E)+1)
  \end{equation}
  where $C_0$ is defined as in Lemma~\ref{lem:stbldec}, $\omega=1$ and $\mathbf{TA}$ is the matrix representation of our preconditioned system.

\end{thm}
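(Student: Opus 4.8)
The plan is to apply the classical abstract Schwarz framework for additive two level methods (see e.g. \cite{smith1996domain,Toselli:2005:DDM}), which reduces the estimate of $\kappa(\mathbf{TA})$ to three standard ingredients: (i) a stable decomposition constant $C_0^2$, (ii) a local stability constant $\omega$, and (iii) a bound on the spectral radius $\rho(E)$ of the matrix $E=(\epsilon_{ij})_{1\le i,j\le N}$ of strengthened Cauchy--Schwarz constants between the local spaces $V_1,\dots,V_N$. The framework then delivers exactly
\begin{equation*}
 \kappa(\mathbf{TA})\le \omega\,C_0^2\,(\rho(E)+1),
\end{equation*}
so it suffices to identify $\omega$, to control $\rho(E)$, and to invoke Lemma~\ref{lem:stbldec} for $C_0^2$.

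First I would verify the local stability. For $i=1,\dots,N$ the operator $T_i$ is by definition the $a$-orthogonal projection of $V_h^0(\Omega)$ onto $V_i$, and $T_0$ is the $a$-orthogonal projection onto the coarse space $V_0\subset V_h^0(\Omega)$ (which is well defined since every SHEM basis function is a genuine discrete harmonic extension of interface data and hence lies in $V_h^0(\Omega)$). Therefore $a(T_iu_i,u_i)=a(u_i,u_i)$ for all $u_i\in V_i$ and all $i=0,\dots,N$, i.e. the local solves are exact, which gives $\omega=1$ as claimed in the theorem.

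Next I would bound $\rho(E)$ by a coloring argument. Since we work with the small overlap $\delta=2h$ and the nonoverlapping partition $\{\Omega_i\}$ is shape regular, each overlapping subdomain $\Omega_i'$ meets only a uniformly bounded number $N_c$ of the other $\Omega_j'$; in particular $\epsilon_{ij}=0$ whenever $\Omega_i'\cap\Omega_j'=\emptyset$. Colouring the subdomains so that subdomains of the same colour are pairwise disjoint then yields $\rho(E)\le N_c$, a constant independent of $h$, $H$, $\delta$ and $\alpha$, so that $\rho(E)+1\preceq 1$. Combining $\omega=1$, this bound on $\rho(E)$, and the stable decomposition estimate $C_0^2\simeq 1+1/\lambda_{m+1}$ from Lemma~\ref{lem:stbldec}, the abstract inequality above gives the stated bound.

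The genuinely hard part of the whole argument is the stable decomposition, but that has already been carried out in Lemma~\ref{lem:stbldec}; what remains here is essentially bookkeeping. The only point deserving a little care is to make sure the coarse contribution is correctly accounted for, i.e. that the factor $\rho(E)+1$ (rather than $\rho(E)$) legitimately absorbs all coarse--local interactions, which is standard once one has checked, as above, that $V_0\subset V_h^0(\Omega)$ and that $T_0$ is an exact coarse solve.
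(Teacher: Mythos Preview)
Your proposal is correct and follows essentially the same approach as the paper: invoke the abstract Schwarz framework, cite Lemma~\ref{lem:stbldec} for the stable decomposition constant $C_0^2$, note $\omega=1$ because the local and coarse solves are exact, and bound $\rho(E)$ via the finite-overlap (coloring) argument. The paper's own proof is in fact slightly terser than yours, merely listing the three assumptions without spelling out the coloring bound on $\rho(E)$.
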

\begin{proof}
 Following the Schwarz framework, cf. \cite{Toselli:2005:DDM,smith1996domain}, we need to prove three assumptions:
\begin{ass}[1]
 This assumption is the stable decomposition one which we have already proved in Lemma~\ref{lem:stbldec}.
\end{ass}
\begin{ass}[2] Let $0\leq\mathcal{E}_{ij}\leq1$ be the minimal values that satisfy 
\begin{equation*}
 a(u_i,u_j)\leq\mathcal{E}_{ij}a(u_i,u_i)^{1/2}a(u_j,u_j)^{1/2},\qquad\forall u_i\in V_i,\;\forall u_j\in V_j\;i,j=1,\ldots,N
\end{equation*}
Define $\rho(\mathcal{E})$ to be the spectral radius of $\mathcal{E}=\{\mathcal{E}_{ij}\}$.
\end{ass}

\begin{ass}[3]
Let $\omega>0$ be the minimal constant such that 
\begin{equation*}
 a(u,u)\leq\omega b_i(u,u),\qquad u\in V_i.
\end{equation*}
\end{ass}
We use exact bilinear forms, i.e., $b_i(u,u)=a(u,u)$, so in our case $\omega=1$ for $i=1,\ldots,N$.
\end{proof}

\section{Non-Spectral Harmonic Enrichment}
\label{sec:nonspec}
Similar coarse basis functions as the ones constructed in the previous section may also be constructed without solving eigenvalue problems on the interfaces. Instead we may solve local lower dimensional problems and extend the solutions harmonically inside each subdomain in the same manner as for the eigenfunctions. 

For this variant of the method we construct the basis functions by solving 
on each interface $\Gamma_{ij}$ the 1D problem 
 \begin{eqnarray}
\label{eq:bubbleconstruction}
  \bar{a}_{\Gamma_{ij}}(\phi^k_{\Gamma_{ij}},v)&=&b_{\Gamma_{ij}}(g^k,v)\qquad\forall v\in V_h^0(\Gamma_{ij}),\\
\phi^k_{\Gamma_{ij}}&=&0\qquad x\in\partial\Gamma_{ij},\nonumber
\end{eqnarray}
where $b_{\Gamma_{ij}}(\cdot,\cdot)$ is given in Definition~\ref{df:geneig} and the alternating function $g^k$ is defined in the following way: let $\mathbf{\gamma}_{ij}:[0,1]\rightarrow\Gamma_{ij}$ be a parametrization of the interface $\Gamma_{ij}$, where $\mathbf{\gamma}_{ij}(0)$ and $\mathbf{\gamma}_{ij}(1)$ describe the end points of $\Gamma_{ij}$ and let $k$ denote the number of the basis functions used for enrichment. The alternating function $g^k(x)$ is then defined for each $k$ by
$$g^k(\mathbf{\gamma}_{ij}(t)):=\begin{cases} \;\;\,1, & t\in[0,\frac{1}{k}], \\-1, &t\in(\frac{1}{k},\frac{2}{k}],\\\;\;\;\vdots\\(-1)^{k-1}, &t\in(\frac{k-1}{k},1].\end{cases}$$
Other choices for $g^k$ are also possible: one could for instance choose $g^k$ to be a family of sine functions or hierarchical basis functions. In any case the crucial part for achieving the same robust behavior as in the eigenfunction case is that we use the weighted $L^2$ inner product on the right hand side in (\ref{eq:bubbleconstruction}). 

Next, we extend these solutions harmonically inside the two subdomains sharing $\Gamma_{ij}$ as an edge in the same manner as for the eigenfunctions, i.e. for each subdomain sharing $\Gamma_{ij}$, we solve
\begin{equation}\label{eq:discreteharmonicbubble}
  \begin{array}{rcll}
  a_i (\chi^k_{\Gamma_{ij}},v)&=&0 \quad&  \forall v \in V_h^0(\Omega_i),\\
  \chi^k_{\Gamma_{ij}}&=&\phi^k_{\Gamma_{ij}}& x\in\Gamma_{ij},\\
\chi^k_{\Gamma_{ij}}&=&0& x\in\left(\partial\Omega_i\cup\partial\Omega_j\right)\setminus\Gamma_{ij},
  \end{array}
\end{equation}
and extend the solution with zero to the rest of $\Omega$.

The non-spectral harmonically enriched multiscale coarse space (NSHEM)
is then defined as the span of the two sets of basis functions
\begin{equation}\label{NSHEM}
  V_0^*:=\mathrm{span}\{\{\phi_k^{ms}\}_{k=1}^{n_\nu}\cup\{\{\chi^l_{\Gamma_{ij}}\}_{l=1}^{m_{ij}}\}_{\Gamma_{ij}\subset\Gamma}.
\end{equation}
We will also refer to the above basis functions as non-spectral functions. An example of such functions for $k=1$ and $k=2$ are given in Figure~\ref{figs:nonspectral} for a problem with $\alpha=1$ and mesh parameters $H=1/2$ and $h=1/32$. We see a close resemblance to the spectral basis functions given in Figure~\ref{figs:spectral} and in the next section we show with numerical examples that the behavior for these variants of the coarse enrichment is almost identical to the behavior of the eigenfunction variant. 
\begin{figure}
\centering
\mbox{\includegraphics[width=0.5\textwidth]{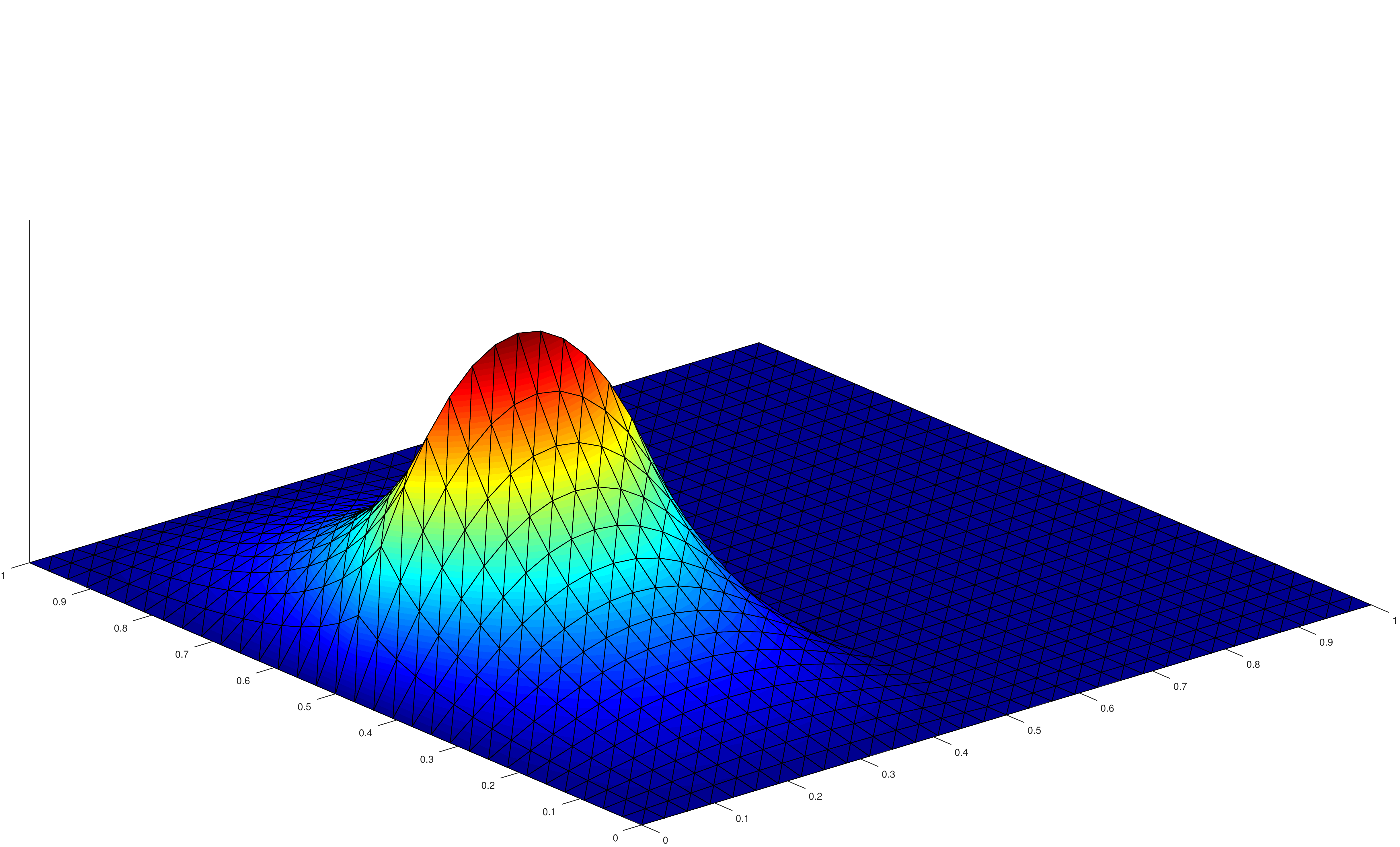}
\includegraphics[width=0.5\textwidth]{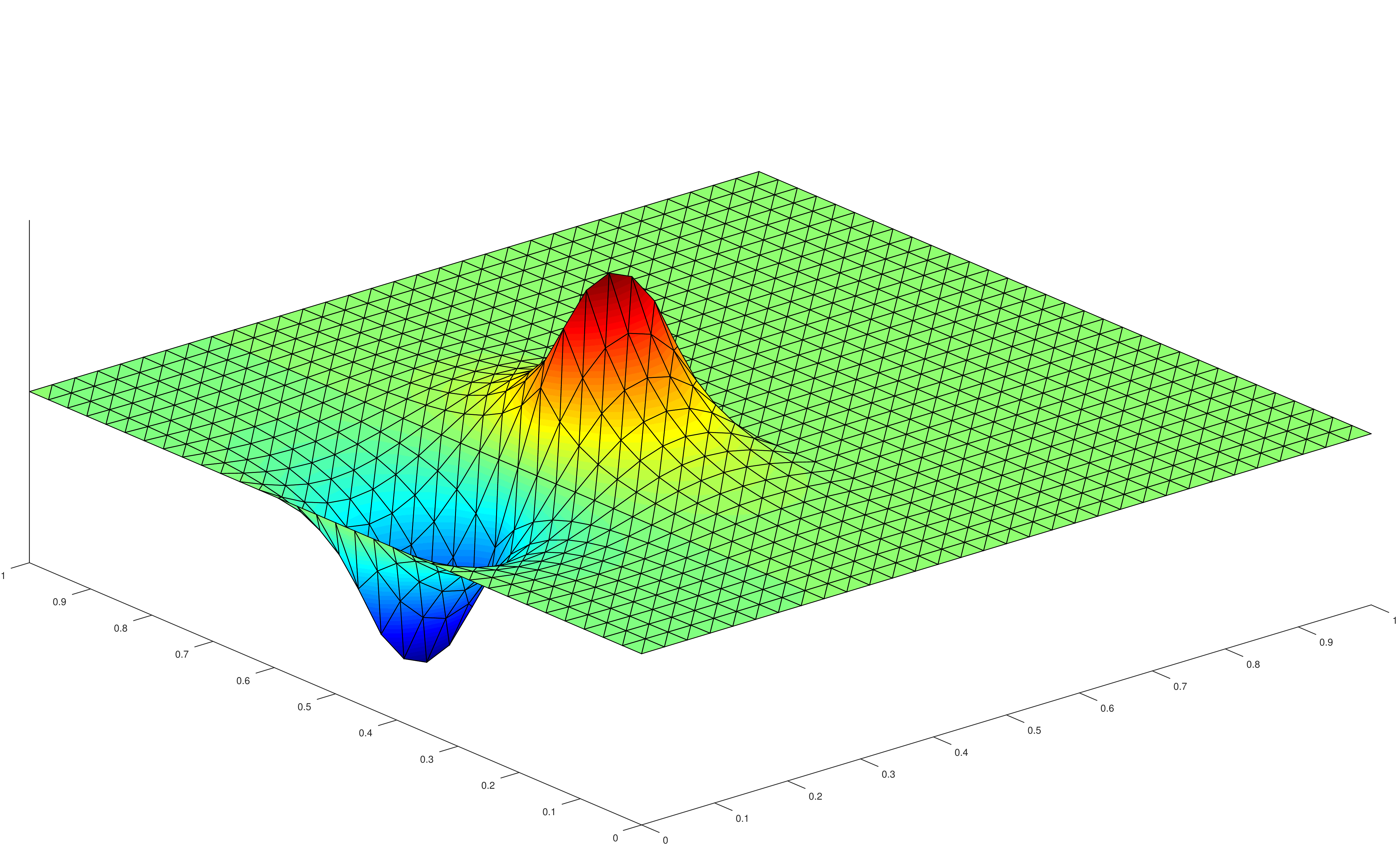}
}
\caption{Plot of the non-spectral basis functions for $h=1/32$ and $H=16h$. \textit{Left:} first order non-spectral basis functions. \textit{Right:} second order non-spectral basis functions.}
\label{figs:nonspectral}
\end{figure}

\section{Numerical Results}

We now present extensive numerical experiments for the new coarse
spaces solving problem (\ref{eq:modelproblem}) on a unit square
domain, i.e. $\Omega=(0,1)^2$, where the right hand side is chosen to
be $f=1$, and the coefficient $\alpha(x)$ represents various (possibly
discontinuous) distributions. We test the new coarse spaces with the
two level additive Schwarz (AS) method, where the coarse spaces have
been enriched with different numbers of spectral and non-spectral
basis functions on each interface, and then AS is used as a preconditioner
for the conjugate gradient method. For all numerical examples, we run
the preconditioned conjugate gradient method until the $l_2$ norm of
the initial residual is reduced by a factor of $10^{6}$, that is,
until $\|r_i\|_2/\|r_0\|_2\leq 10^{-6}$. The coefficient $\alpha(x)$
for all the numerical examples is equal to $1$, except in the areas
marked with red where the value of $\alpha(x)$ is equal to
$\hat\alpha$.

\subsection{Comparison of SHEM and NSHEM}

We start by showing that SHEM and the three variants of NSHEM
introduced in the previous sections have similar performance. To do
so, we use the distribution of $\alpha$ with three inclusions of
different size crossing interfaces between subdomains
in Figure~\ref{fig:3channels}. 
\begin{figure}
   \centering
   \includegraphics[width=\textwidth]{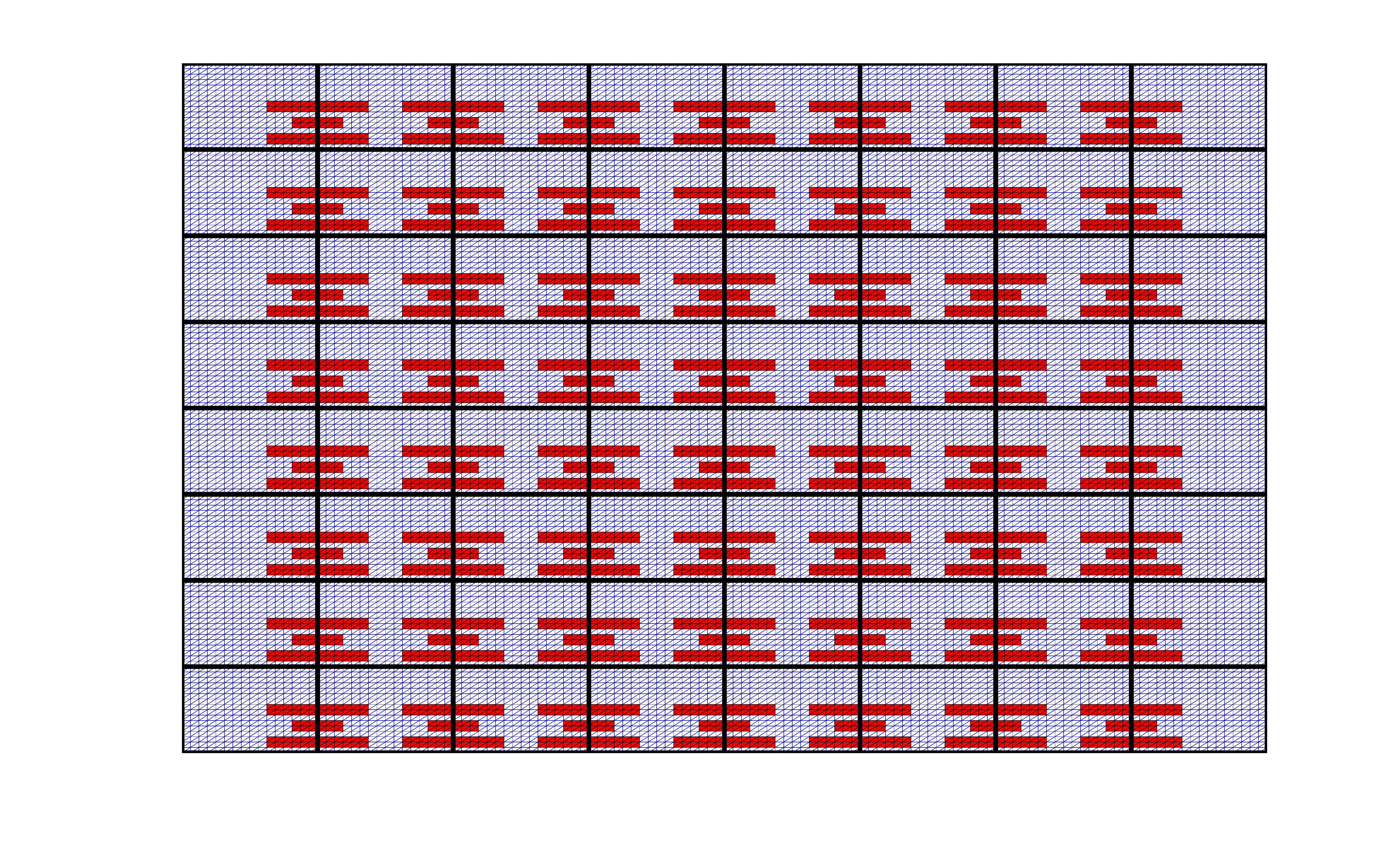}
  \caption{Distribution of $\alpha$ for a geometry with
    $h=\frac{1}{128}$, $H=16h$. The regions marked with red are where
    $\alpha$ has a large value.}
  \label{fig:3channels}
\end{figure}
We test the case where the multiscale coarse space is enriched with
three basis functions on each interface (SHEM$_3$) and compare the number of
iterations and condition number in Table~\ref{tbl:coarsecomp} to 
NSHEM$_3^a$, which denotes the variant with three piecewise constant
alternating function $g^k$ in the definition
(\ref{eq:bubbleconstruction}), NSHEM$_3^s$, which uses sine functions for
$g^k$, and NSHEM$_3^h$, which uses hierarchical basis functions for $g^k$.
\begin{table}\footnotesize
\centering
\begin{tabular}{c c c c c}
\hline
Type:&SHEM$_3$&NSHEM$_3^a$&NSHEM$_3^s$&NSHEM$_3^h$\\
\hline
$\hat\alpha$&\#it. $(\kappa)$&\#it. $(\kappa)$&\#it. $(\kappa)$&\#it. $(\kappa)$\\
\hline
  $10^0$&13 (5.19e0)&13 (5.50e0)&13 (5.19e0)&17 (8.26e0)\\
  $10^2$&18 (7.42e0)&19 (7.74e0)&19 (7.80e0)&20 (7.58e0)\\
  $10^4$&19 (7.44e0)&19 (7.49e0)&19 (7.44e0)&20 (7.60e0)\\
  $10^6$&19 (7.44e0)&19 (7.49e0)&19 (7.44e0)&21 (7.61e0)\\
\hline 
\end{tabular}
\caption{Comparison of the iteration count and condition number
  estimate for SHEM$_3$ and the three NSHEM$_3$ variants for the distribution
  given in Figure~\ref{fig:3channels} with $h=\frac{1}{128}$,
  $H=16h$.}
  \label{tbl:coarsecomp}
\end{table} 
We see from the results in Table~\ref{tbl:coarsecomp} that the performance of the four variants is very similar. In fact, for the first three variants of the coarse enrichment, the performance of the method is basically identical.

We next revisit the distribution of $\alpha$ given in Figure~\ref{fig:complicatedex}, and compare SHEM$_i$ and NSHEM$_i^a$ with a varying number of basis functions $i=1,2,3,4$ on each interface. We state the number of iterations and condition number for each run in the Table~\ref{tbl:spectralex} and \ref{tbl:bubbleex}. 
\begin{table}\footnotesize
\centering
\begin{tabular}{c c c c c c}
\hline
   &MS&SHEM$_1$&SHEM$_2$&SHEM$_3$&SHEM$_4$\\
\hline
dim.&49&161&273&385&497\\
\hline
$\hat\alpha$&\#it. $(\kappa)$&\#it. $(\kappa)$&\#it. $(\kappa)$&\#it. $(\kappa)$&\#it. $(\kappa)$\\
\hline
  $10^0$&21 (1.29e1)&16 (7.45e0)&15 (5.99e0)&13 (5.19e0)&13 (5.15e0)\\
  $10^2$&122 (3.74e2)&70 (1.17e2)&47 (6.70e1)&19 (6.77e0)&16 (5.66e0)\\
  $10^4$&367 (3.64e4)&248 (1.10e4)&187 (6.22e3)&19 (6.78e0)&17 (5.73e0)\\
  $10^6$&610 (3.64e6)&423 (1.10e6)&290 (6.22e5)&19 (6.78e0)&17 (5.73e0)\\
\hline
\end{tabular}
\caption{Iteration count and condition number estimate for the
  distribution in Figure~\ref{fig:complicatedex} for the classical
  multiscale coarse space (MS) and different numbers of spectral basis
  functions $i=1,2,3,4$ in SHEM$_i$ with $h=\frac{1}{128}$, $H=16h$.}
  \label{tbl:spectralex}
\end{table} 
\begin{table}\footnotesize
\centering
\begin{tabular}{c c c c c}
\hline
 &NSHEM$_1^a$&NSHEM$_2^a$&NSHEM$_3^a$&NSHEM$_4^a$\\
\hline
dim.&161&273&385&497\\
\hline
$\hat\alpha$&\#it. $(\kappa)$&\#it. $(\kappa)$&\#it. $(\kappa)$&\#it. $(\kappa)$\\
\hline
  $10^0$&16 (7.52e0)&14 (6.06e0)&13 (5.50e0)&13 (5.23e0)\\
  $10^2$&71 (1.17e2)&49 (6.92e1)&19 (6.79e0)&17 (5.74e0)\\
  $10^4$&256 (1.11e4)&130 (6.45e3)&20 (6.81e0)&20 (6.67e0)\\
  $10^6$&454 (1.11e6)&221 (6.45e5)&20 (6.81e0)&20 (6.67e0)\\
\hline 
\end{tabular}
\caption{Iteration count and condition number estimate for the
  distribution in Figure~\ref{fig:complicatedex} for different numbers
  of non-spectral basis functions $i=1,2,3,4$ in NSHEM$_i^a$ with
  $h=\frac{1}{128}$, $H=16h$.}
  \label{tbl:bubbleex}
\end{table} 
We see again that the performance of SHEM$_i$ and NSHEM$_i^a$ is very similar,
and the solver becomes robust for the same number $i=3$ of enrichment functions, corresponding to the number of inclusions across the interfaces.

\subsection{An Adaptive Variant of SHEM}

In Table~\ref{tbl:adaptive} 
\begin{table}\footnotesize
\centering
\begin{tabular}{c c c c}
\hline
&SHEM$_{\tiny\mbox{adapt}}$&\\
\hline
$\hat\alpha$&\#it. $(\kappa)$&dim.\\
\hline
  $10^0$&21 (1.29e1)&49\\
  $10^2$&25 (1.10e1)&233\\
  $10^4$&25 (1.09e1)&233\\
  $10^6$&25 (1.09e1)&233\\
\hline
\end{tabular}
\caption{Iteration and condition number estimate for the distribution
  in Figure~\ref{fig:complicatedex} for SHEM$_{\tiny\mbox{adapt}}$
  with $h=\frac{1}{128}$, $H=16h$.}
\label{tbl:adaptive}
\end{table}
we give the number of iterations and condition number for an adaptive
version we call SHEM$_{\tiny\mbox{adapt}}$ in the case of the distribution
of $\alpha$ given in Figure~\ref{fig:complicatedex}: on each interface
shared by two subdomains we enrich the coarse space with spectral
basis functions corresponding to eigenvalues below a certain
threshold. By comparing the eigenvalues to the eigenvalues of the
Laplacian we may choose the threshold in such a way that we only
include spectral functions corresponding to eigenvalues that are
smaller than the smallest eigenvalue of the Laplacian on each
interface. Eigenvalues below this threshold will correspond to
discontinuities along the subdomain boundaries.

By studying the dimension of the enriched coarse space in
Table~\ref{tbl:adaptive} we see that by a proper choice of the cut-off
criteria the method is completely insensitive to any discontinuity
inside subdomains and along subdomain boundaries. Also, if we count
the number of high conducting regions in
Figure~\ref{fig:complicatedex} crossing the interfaces, we see that
the number of eigenfunctions needed on each interface equals the
number of inclusions or channels crossing it.


\subsection{An Example of OHEM}

If all of the spectral basis functions corresponding to each interface
are included, then the coarse space spans the full discrete harmonic
function space $\tilde V_h$, which is the optimal harmonically enriched multiscale
coarse space (OHEM), and the method can be made into a
direct solver: The subdomain solves are only used to incorporate the
influence of the right hand side function into the solution, while the
full harmonic space then connects and shifts them correctly, and
overlap is not needed any more. In Table~\ref{tbl:allevels}, 
\begin{table}\footnotesize
\centering
\begin{tabular}{c c c}
\hline
&non-overlapping&overlapping\\
\hline
$\hat\alpha$&\#it. $(\kappa)$&\#it. $(\kappa)$\\
\hline
  $10^0$&1 (1)&10 (5)\\
  $10^2$&1 (1)&13 (5)\\
  $10^4$&1 (1)&13 (5)\\
  $10^6$&1 (1)&13 (5)\\
\hline
\end{tabular}
\caption{Iteration and condition number estimate for OHEM applied to the
  distribution in Figure~\ref{fig:complicatedex} with
  $h=\frac{1}{128}$, $H=16h$.}
\label{tbl:allevels}
\end{table}
we show some numerical results for the non-overlapping case, where we
obtain the solution in one iteration, and the corresponding
overlapping method, where we see that the error the additive Schwarz
method commits in the overlap to remain symmetric (see
\cite{Gander:2003:RAS,Gander:2008:SchwarzTime}) prevents the method
from converging in one iteration. This can be corrected using RAS
which will be shown in \cite{gander2016RASEIG}, see also
\cite{gander2013new}. OHEM is not really a practical method, since the
coarse space is very big, but it is of great theoretical interest,
since it defines precisely which object a good coarse space should
approximate, and this is how we discovered SHEM and NSHEM, see also
\cite{gander2011optimal} where this complete coarse space information
is encoded in transmission conditions between subdomains.

\subsection{Highly Irregular Conductivities and Subdomains}

We now test the case where we allow the distribution of $\alpha$ to be
highly irregular. Inspired by an example in
\cite{Willems:2013:Sepctral} we consider a slightly modified version
of it, as depicted in Figure~\ref{fig:complicatedex2}.
\begin{figure}
   \centering
   \includegraphics[width=\linewidth]{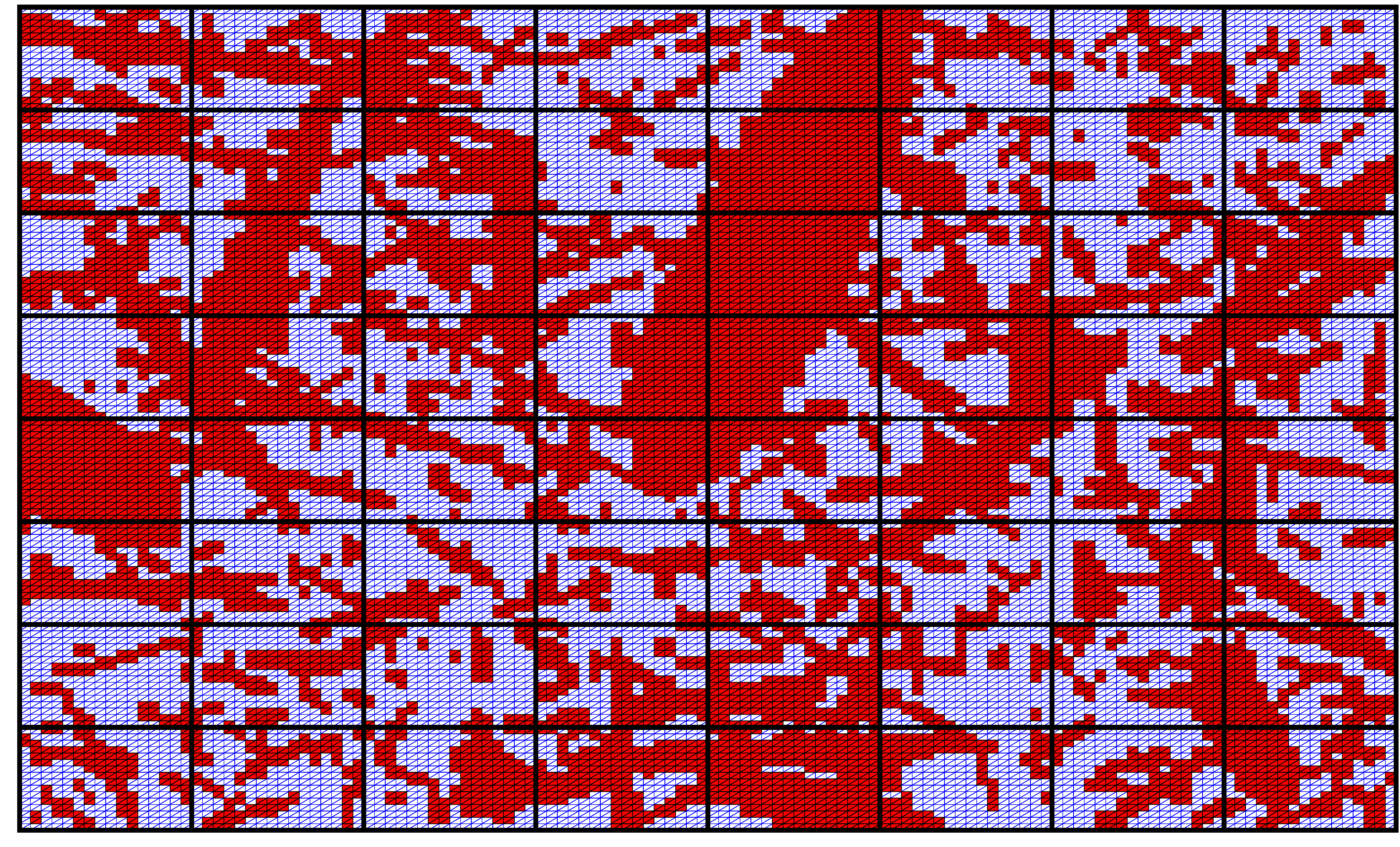}
  \caption{Distribution of $\alpha$ for a geometry with
    $h=\frac{1}{128}$, $H=16h$. The regions marked with red are where
    $\alpha$ has a large value.}
  \label{fig:complicatedex2}
\end{figure}
For this case we choose the threshold for the adaptive variant
SHEM$_{\tiny\mbox{adapt}}$ in such a way that we are guaranteed at least
one spectral basis function on each edge. The number of iterations and
condition numbers for increasing $\hat\alpha$ are given in
Table~\ref{tbl:adaptive2}
\begin{table}\footnotesize
\centering
\begin{tabular}{c c c|| c c}
\hline
Type:&\multicolumn{2}{c||}{SHEM$_3$}&\multicolumn{2}{c}{SHEM$_{\tiny\mbox{adapt}}$}\\
\hline
$\hat\alpha$&\#it. $(\kappa)$&dim.&\#it. $(\kappa)$&dim.\\
\hline
  $10^0$&13 (5.19e0)&385&16 (7.45e0)&161\\
  $10^2$&22 (9.47e0)&385&25 (1.07e1)&239\\
  $10^4$&23 (9.60e0)&385&26 (1.11e1)&236\\
  $10^6$&24 (9.59e0)&385&28 (1.08e1)&236\\
\hline
\end{tabular}
\caption{Iteration and condition number estimate for the distribution
  in Figure~\ref{fig:complicatedex2} comparing SHEM$_3$ and
  SHEM$_{\tiny\mbox{adapt}}$ with $h=\frac{1}{128}$, $H=16h$.}
\label{tbl:adaptive2}
\end{table}
for SHEM$_3$ and SHEM$_{\tiny\mbox{adapt}}$. We see from the table that the
dimension of the coarse space is still lower than the dimension for
the local subspaces. In addition, we see that the performance of SHEM$_{\tiny\mbox{adapt}}$ with a substantially smaller coarse space is still
comparable to SHEM$_3$ with three enrichment functions on each interface.

In order to test SHEM on more realistic problems, we consider now the
case where $\Omega$ is subdivided into irregular fine triangles for
the fine mesh and irregular subdomains arising from the graph
partitioning software METIS \cite{Karypis:1998:METIS} for the
subdomains and the coarse mesh. For these experiments, we consider
only the adaptive variant SHEM$_{\tiny\mbox{adapt}}$ and compare it with
the standard multiscale coarse space without enrichment we denote by
MS. For our first example, we modify the distribution given in
Figure~\ref{fig:complicatedex2} by discretizing it with an irregular
mesh such that the largest diameter of the elements is $1/256$. We
then partition the mesh into irregular subdomains using METIS, see
Figure~\ref{fig:complicatedirrMetis}.
 \begin{figure}
   \centering
   \includegraphics[width=\linewidth]{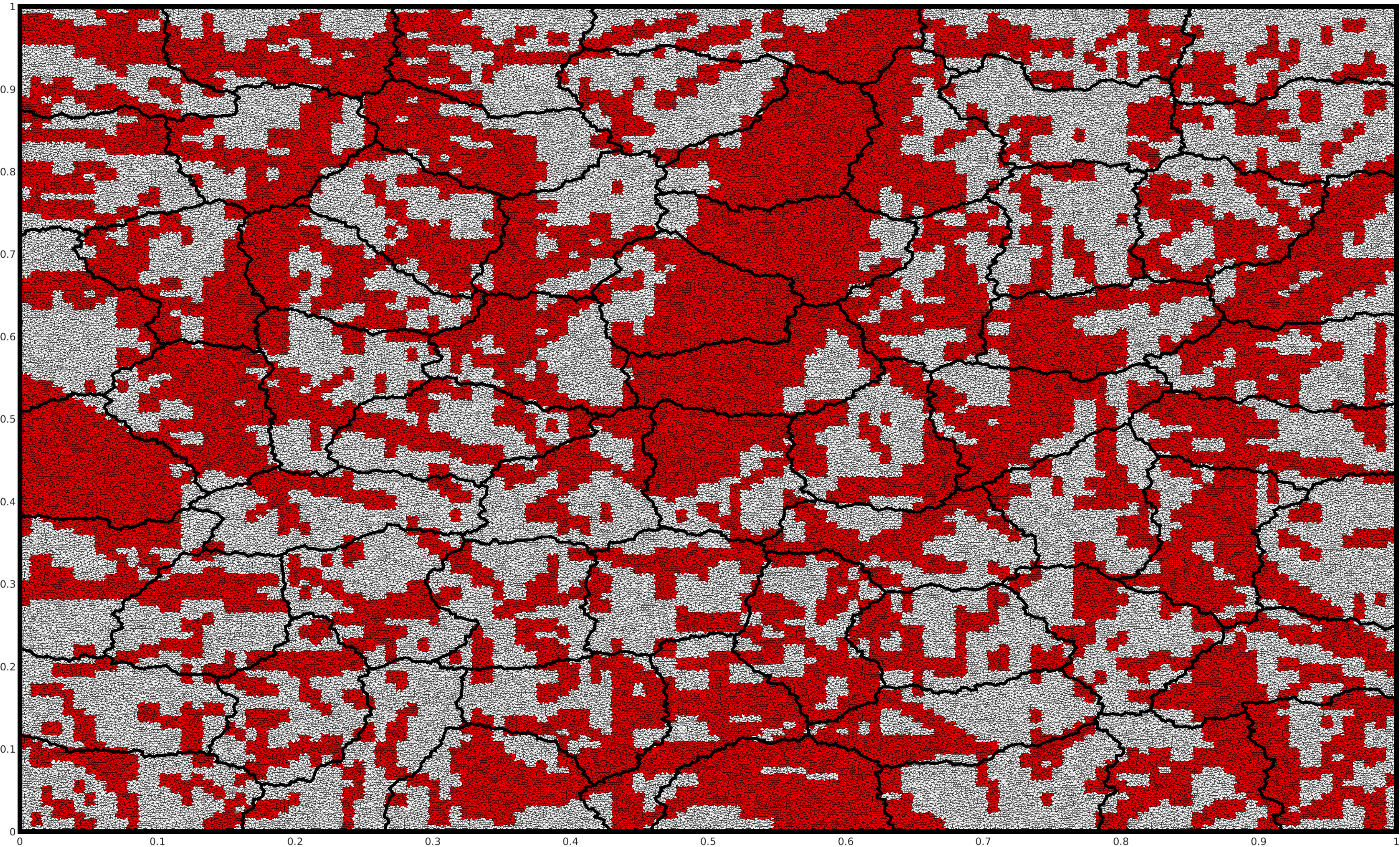}
  \caption{Distribution of $\alpha$ for a geometry discretized with $226918$ irregular triangles and partitioned into 64 irregular subdomains with METIS. The regions marked with red are where $\alpha$ has a large value. The largest subdomain has 1889 dofs.}
  \label{fig:complicatedirrMetis}
\end{figure}
The tolerance for including spectral functions into the coarse space
is set to $\tau=1/32$ and the iteration count and condition number
estimate for \texttt{pcg} are given in
Table~\ref{tbl:complicatedirrMetis} 
\begin{table}\footnotesize
\centering
\begin{tabular}{c c c|| c c}
\hline
Type:&\multicolumn{2}{c||}{MS}&\multicolumn{2}{c}{SHEM$_{\tiny\mbox{adapt}}$}\\
\hline
$\hat\alpha$&\#it. $(\kappa)$&dim.&\#it. $(\kappa)$&dim.\\
\hline
  $10^0$&47 (3.55e1)&101&21 (6.95e0)&676\\
  $10^2$&143 (7.49e2)&101&28 (1.11e1)&738\\
  $10^4$&699 (3.50e4)&101&28 (1.10e1)&738\\
  $10^6$&898 (3.41e6)$^*$&101&30 (1.10e1)&738\\
\hline 
\multicolumn{2}{l}{* Stagnation. }
\end{tabular}
\caption{Comparison of the iteration count and condition number
  estimate for MS (without enrichment) and SHEM$_{\tiny\mbox{adapt}}$ with tolerance $\tau =
  1/32$ for the problem given in
  Figure~\ref{fig:complicatedirrMetis}. The number of coarse dofs for
  both methods is given in the columns next to the iteration count and
  condition number estimate.}
  \label{tbl:complicatedirrMetis}
\end{table} 
where we also report the size of the coarse space. For the second
example, we consider the same fine grid and coarse grid as in the
previous example, but now the coefficients $\alpha$ are taken as the
permeability field from the bottom layer of the SPE 10th comparative
solution project \cite{Christie:2001:Tenth}, see Figure
\ref{fig:SPE10BottomLayer}.
\begin{figure}[h]
   \centering
   \includegraphics[width=\linewidth]{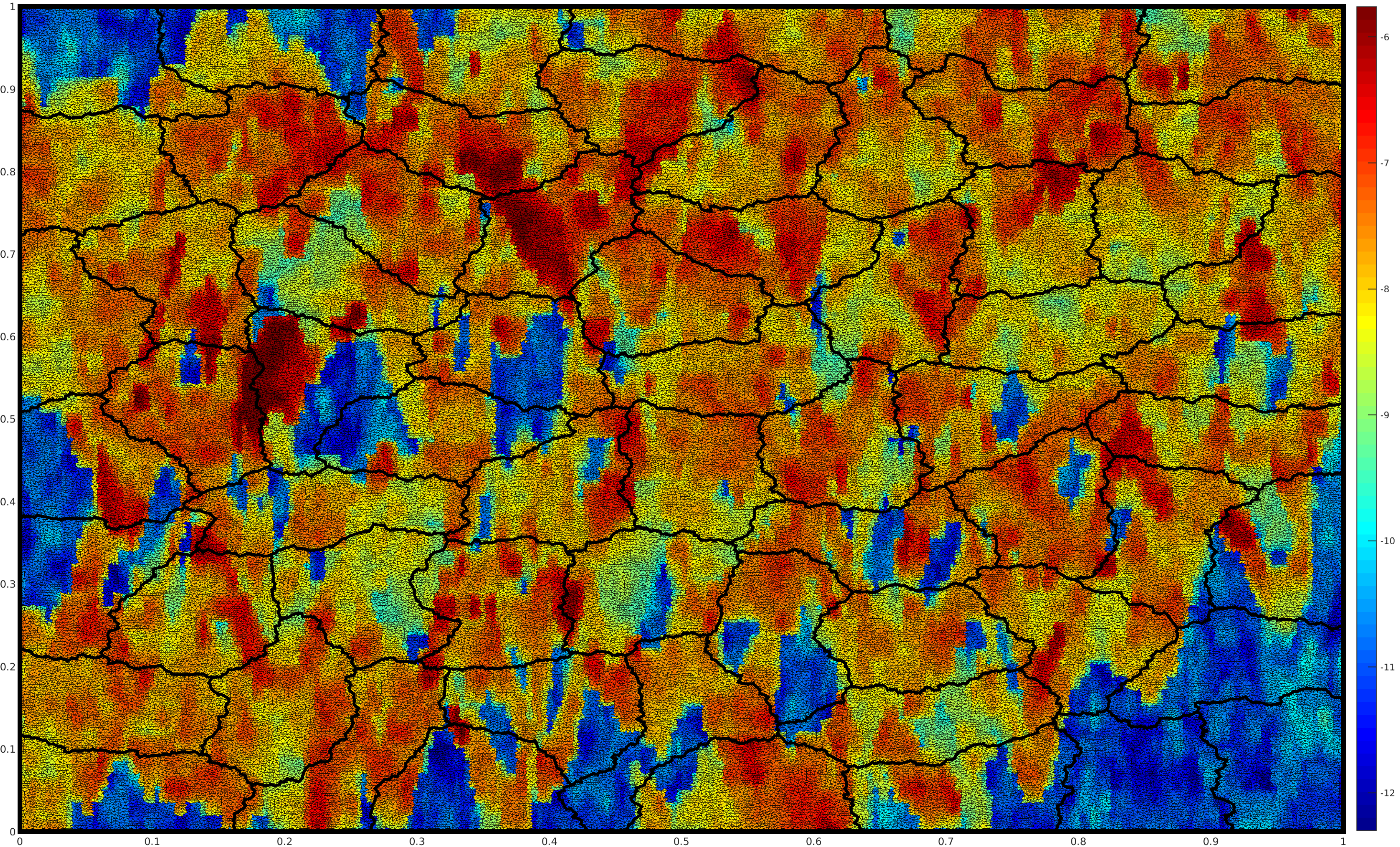}
  \caption{Base 10 logarithm of the permeability field corresponding to the SPE10 Bottom Layer test case for a geometry discretized with $226918$ irregular triangles and partitioned into 64 irregular subdomains with METIS. The largest subdomain has 1889 dofs.}
  \label{fig:SPE10BottomLayer}
\end{figure}
We compare the results of MS with SHEM$_{\tiny\mbox{adapt}}$ for
three different choices of the tolerance $\tau$ and report the
iteration count and condition number estimate for \texttt{pcg} in
Table \ref{tbl:complicatedirrMetis} in addition to the number of
coarse basis functions for each choice of $\tau$.
\begin{table}\footnotesize
\centering
\begin{tabular}{c c c|| c c}
\hline
Type:&\multicolumn{2}{c||}{MS}&\multicolumn{2}{c}{SHEM$_{\tiny\mbox{adapt}}$}\\
\hline
$\tau$&\#it. $(\kappa)$&dim.&\#it. $(\kappa)$&dim.\\
\hline
$\frac{1}{64}$&124 (3.75e2)&101&30 (1.23e1)&489\\
$\frac{1}{32}$&124 (3.75e2)&101&27 (1.09e1)&646\\
$\frac{1}{16}$&124 (3.75e2)&101&23 (7.72e0)&910\\
\hline 
\end{tabular}
\caption{Comparison of the iteration count and condition number
  estimate for MS (without enrichment) and SHEM$_{\tiny\mbox{adapt}}$ for the
  problem given in Figure~\ref{fig:SPE10BottomLayer}. The number of
  coarse dofs for both methods is given in the columns next to the
  iteration count and condition number estimate.}
  \label{tbl:SPE10BottomLayer}
\end{table} 
These examples show that SHEM$_{\tiny\mbox{adapt}}$ performs very well even
for very hard problems and much better than MS that only
uses the initial multiscale coarse space. For all choices of the tolerance
$\tau$, the SHEM$_{\tiny\mbox{adapt}}$ is insensitive to the irregularities of
both the subdomain/coarse partitioning and the fine grid and also with
respect to the high contrast in the underlying material coefficient,
without the dimension of the coarse space becoming larger than the
dimension of the largest subdomain and thus the coarse solve
never becomes the bottleneck of the method.

\section{Conclusion}

We introduced a new harmonically enriched multiscale coarse space
(HEM) based on the multiscale coarse space from
\cite{Graham:2007:DDmult}. Our first enrichment process uses lower
dimensional eigenvalue problems on interfaces between subdomains and
extends these solutions harmonically inside each of the corresponding
subdomains sharing the interface, which leads to the spectral
harmonically enriched multiscale coarse space (SHEM). We showed both
theoretically and numerically that SHEM guarantees robustness with
respect to the discontinuities/variations of the coefficient in the
problem. When we complete the enrichment to the full discrete harmonic
space, the coarse space becomes optimal (OHEM) and the method turns
into a direct solver.

We then proposed three enrichment variants not based on eigenvalue
problems (NSHEM), which simply solve lower dimensional variants of the
original problem on the interfaces between subdomains. Our numerical
experiments showed that NSHEM performs equally well in practice as
SHEM. NSHEM does however not have the eigenvalue information
that can be used as a natural measure to make NSHEM into an adaptive
method that guarantees robustness with respect to the contrast.

The new coarse spaces based on harmonic enrichment are not only 
excellent coarse spaces for the high contrast elliptic model problem we
considered, they are also an ideal choice for many other partial
differential equations, since they are based on the approximation of an
optimal coarse space which contains all coarse space components
necessary to turn the domain decomposition method into a direct
solver. 

\section*{Acknowledgment}
We thank Professor Leszek Marcinkowski for his comments, particularly
for pointing out to us that Definition 2.1 in our earlier version did
not cover the case where elements with large coefficients would touch
subdomain boundaries at only one point.
 \bibliography{multiscalespectralDD}
 \bibliographystyle{plain}
\end{document}